\renewcommand{\bar}{\overline}
\renewcommand{\hat}{\widehat}
\renewcommand{\tilde}{\widetilde}
\newtheorem{thm}{Theorem}[section]
\newtheorem{lem}[thm]{Lemma}
\newtheorem{exa}[thm]{Example}
\theoremstyle{definition}
\newtheorem{defn}{Definition}[section]
\newcommand{\scr}[1]{\mathscr #1}
\definecolor{wco}{rgb}{0.5,0.2,0.3}
\numberwithin{equation}{section} \theoremstyle{remark}
\newtheorem{rem}{Remark}[section]
\newcommand{\ua}{\uparrow}
\title{{\bf Moderate Deviation and Central Limit Theorem   for SDDEs with Polynomial Growth}
}
\author{
{\bf  Yongqiang Suo,  Jin Tao, Wei Zhang\thanks{zhangyunfei78@csu.edu.cn}}\\
\footnotesize{School of Mathematics and Statistics, Central South
University, Changsha 410083, China}\\
\footnotesize{bellekelly@csu.edu.cn}
}
\begin{document}
\def\A{\mathscr{A}}
\def\G{\mathscr{G}}
\def\eq{\equation}
\def\bg{\begin}
\def\ep{\epsilon}
\def\111{±ßÖµÎÊÌâ(1)--(2)}
\def\x{\|x\|}
\def\y{\|y\|}
\def\xr{\|x\|_r}
\def\xrr{(\sum_{i=1}^T|x_i|^r)^{\frac{1}{r}}}
\def\R{\mathbb R}
\def\ff{\frac}
\def\ss{\sqrt}
\def\B{\mathbf B}
\def\N{\mathbb N}
\def\kk{\kappa} \def\m{{\bf m}}
\def\dd{\delta} \def\DD{\Dd} \def\vv{\varepsilon} \def\rr{\rho}
\def\<{\langle} \def\>{\rangle} \def\GG{\Gamma} \def\gg{\gamma}
  \def\nn{\nabla} \def\pp{\partial} \def\EE{\scr E}
\def\d{\text{\rm{d}}} \def\bb{\beta} \def\aa{\alpha} \def\D{\scr D}
  \def\si{\sigma} \def\ess{\text{\rm{ess}}}\def\lam{\lambda}
\def\beg{\begin} \def\beq{\begin{equation}}  \def\F{\scr F}
\def\Ric{\text{\rm{Ric}}} \def \Hess{\text{\rm{Hess}}}
\def\e{\text{\rm{e}}} \def\ua{\underline a} \def\OO{\Omega}  \def\oo{\omega}
 \def\tt{\tilde} \def\Ric{\text{\rm{Ric}}}
\def\cut{\text{\rm{cut}}} \def\P{\mathbb P} \def\ifn{I_n(f^{\bigotimes n})}
\def\C{\scr C}      \def\alphaa{\mathbf{r}}     \def\r{r}
\def\gap{\text{\rm{gap}}} \def\prr{\pi_{{\bf m},\varrho}}  \def\r{\mathbf r}
\def\Z{\mathbb Z} \def\vrr{\varrho} \def\l{\lambda}
\def\L{\scr L}\def\Tilde{\tilde} \def\TILDE{\tilde}\def\II{\mathbb I}
\def\i{{\rm in}}\def\Sect{{\rm Sect}}\def\E{\mathbb E} \def\H{\mathbb H}
\def\M{\scr M}\def\Q{\mathbb Q} \def\texto{\text{o}} \def\LL{\Lambda}
\def\Rank{{\rm Rank}} \def\B{\scr B} \def\i{{\rm i}} \def\HR{Hat{\R}^d}
\def\to{\rightarrow}\def\l{\ell}\def\ll{\lambda}
\def\8{\infty}\def\ee{\epsilon} \def\Y{\mathbb{Y}} \def\lf{\lfloor}
\def\rf{\rfloor}\def\3{\triangle}\def\H{\mathbb{H}}\def\S{\mathbb{S}}\def\1{\lesssim}
\def\va{\varphi}

\def\R{\mathbb R}  \def\ff{\frac} \def\ss{\sqrt} \def\B{\mathbf
B}
\def\N{\mathbb N} \def\kk{\kappa} \def\m{{\bf m}}
\def\dd{\delta} \def\DD{\Delta} \def\vv{\varepsilon} \def\rr{\rho}
\def\<{\langle} \def\>{\rangle} \def\GG{\Gamma} \def\gg{\gamma}
  \def\nn{\nabla} \def\pp{\partial} \def\EE{\scr E}
\def\d{\text{\rm{d}}} \def\bb{\beta} \def\aa{\alpha} \def\D{\scr D}
  \def\si{\sigma} \def\ess{\text{\rm{ess}}}
\def\beg{\begin} \def\beq{\begin{equation}}  \def\F{\scr F}
\def\Ric{\text{\rm{Ric}}} \def\Hess{\text{\rm{Hess}}}
\def\e{\text{\rm{e}}} \def\ua{\underline a} \def\OO{\Omega}  \def\oo{\omega}
 \def\tt{\tilde} \def\Ric{\text{\rm{Ric}}}
\def\cut{\text{\rm{cut}}} \def\P{\mathbb P} \def\ifn{I_n(f^{\bigotimes n})}
\def\C{\scr C}      \def\aaa{\mathbf{r}}     \def\r{r}
\def\gap{\text{\rm{gap}}} \def\prr{\pi_{{\bf m},\varrho}}  \def\r{\mathbf r}
\def\Z{\mathbb Z} \def\vrr{\varrho} \def\ll{\lambda}
\def\L{\scr L}\def\Tt{\tt} \def\TT{\tt}\def\II{\mathbb I}
\def\i{{\rm in}}\def\Sect{{\rm Sect}}\def\E{\mathbb E} \def\H{\mathbb H}
\def\M{\scr M}\def\Q{\mathbb Q} \def\texto{\text{o}} \def\LL{\Lambda}
\def\Rank{{\rm Rank}} \def\B{\scr B} \def\i{{\rm i}} \def\HR{\hat{\R}^d}
\def\to{\rightarrow}\def\l{\ell}
\def\8{\infty}\def\X{\mathbb{X}}\def\3{\triangle}
\def\V{\mathbb{V}}\def\M{\mathbb{M}}\def\W{\mathbb{W}}\def\Y{\mathbb{Y}}\def\1{\lesssim}

\def\La{\Lambda}\def\S{\mathbf{S}}
\def\va{\varphi}
\def\l{\lambda}
\def\var{\varphi}
\renewcommand{\bar}{\overline}
\renewcommand{\hat}{\widehat}
\renewcommand{\tilde}{\widetilde}

\maketitle
\begin{abstract}
In this paper, employing the weak convergence method, based on a  variational representation for expected
values of positive functionals of a Brownian motion, we investigate  moderate deviation 
 for a class of stochastic differential delay equations with small
noises, where the coefficients are allowed to be highly nonlinear growth with respect to the variables.
Moreover, we obtain the central limit theorem for stochastic differential delay equations which the coefficients are polynomial growth with respect to the delay variables.
\end{abstract}
\noindent
 AMS Subject Classification:\  60F05,   60F10,  60H10.    \\
\noindent
 Keywords: Stochastic differential delay equation; polynomial growth; central limit theorem; moderate deviation principle; weak convergence
 \vskip 2cm

\section{Introduction and Main Results}
    There has been extensive literature on the theory of large deviation
principle (LDP) for stochastic differential equations (SDEs) with
small noises  since the pioneer work
due to Freidlin-Wentzell 
\cite{Freidlin}. As we know, the
classical method to show LDP  is based on an approximation argument
and some  exponential-type estimates;
 see, e.g., \cite{CS,CPL,FM,KG,XJ,CW}. 
 As far as the classical method is concerned,
 the exponential-type estimate  is a hard  ingredient to deal with because
 different
 SDEs or stochastic partial differential equations (SPDEs) need different   techniques.
 In recent years, the weak convergence method,
 (see, e.g., 
 \cite{BD,BDV,ADP} and references therein)  has
 been developed  to study LDP problems for diverse setups, where the advantage of this method is that it avoids some exponential
  probability estimates; see, e.g.,
 \cite{BDP,Dupuis,LW,JD,JR,SSP}
  for SDEs/SPDEs driven by Brownian motion, and
 \cite{BaoJ,BAC,Bud,ZJ}
 for SDEs/SPDEs driven by jump processes.

\smallskip

Recently, numerous mathematicians work on central limit theorem
(CLT); see, e.g., \cite{HK,FP,WZZ}.
 Since moderate deviation principle (MDP) fills the gap between   CLT scale and LDP scale,
  it has been gained much attention. With regard to MDP, we refer to, e.g., 
 \cite{BuD}
   for SDEs driven by a Poisson random measure  in finite and infinite
   dimensions,
   \cite{LiYumeng}
  for stochastic heat equation driven by a Gaussian noise, 
 \cite{WZZ} for $2$D stochastic Navier--Stokes
  equations,
and  \cite{WZ} for stochastic reaction-diffusion equations with
multiplicative
 noise.
 Specially, \cite{MMM} is devoted to investigate moderate deviations for neutral stochastic differential delay equations with jump, the assumptions in it are those the coefficient is of quadratic growth with respect to the delay variables, inspired this, we try to construct weaker assumptions
 to investigate MDP.  

\smallskip

It is worthy to point out that most of the literature  focus on MDPs
and CLTs for SDEs with linear growth; see, e.g., \cite{BuD,WZZ}.
Whereas, in the present work, we are interested in MDPs for
a wide range of  SDEs with memory, which allow the coefficients are nonlinear growth with respect to the variables and CLTs  which allow the coefficients to
be of polynomial growth with respect to the delay variables. For
more details on SDEs with memory, we refer to
the monograph 
\cite{MS}.
\smallskip

To begin, for any $\varepsilon\in(0,1)$,   
consider the following stochastic differential delay equation (SDDE)
 \bg{equation}\label{eq1.1}
\d X^{\ep}(t)=b(X^{\ep}(t),X^{\ep}(t-\tau))\d
t+\ss\ep\sigma(X^{\ep}(t),X^{\ep}(t-\tau))\d W(t),~~~~t>0
\end{equation}
with the initial data $X^\ep(\theta)=\xi(\theta),
\theta\in[-\tau,0]$, where
$b:\mathbb{R}^n\times\mathbb{R}^n\mapsto \mathbb{R}^n$,
$\sigma:\mathbb{R}^n\times\mathbb{R}^n\mapsto \mathbb{R}^{n\times
m}$, and $\{W(t)\}_{t\ge0}$ is an $m$-dimensional Brownian motion
defined on  the filtered probability space  $(\Omega,\mathcal
{F},\{\mathcal {F}_t\}_{t\geq0},\mathbb{P})$.

Intuitively, as $\ep\downarrow 0, \{X^\ep(t)\}_{t\ge 0}$,  the
solution to \eqref{eq1.1}, tends to $\{X^0(t)\}_{t\ge 0}$, which
solves the following deterministic differential delay  equation
\bg{equation}\label{eq1.2} \d X^{0}(t)=b(X^{0}(t),X^{0}(t-\tau))\d
t,\ \ \ \ t>0,\ \ \ X^0(\theta)=\xi(\theta),~~\theta\in[-\tau,0].
\end{equation}

In this paper, we shall investigate deviations of $X^\ep$ from the
deterministic solution $X^0$, as $\ep\downarrow 0$. That is, we are
interested in the asymptotic behavior of the trajectories:
\begin{equation}\label{eq1.3}
Z^\ep(t):=\ff{1}{\ss\ep\lambda(\ep)}(X^\ep(t)-X^0(t)),~~~t\in[0,T],
\end{equation}
in which $\lam(\ep)$ is some deviation scale. In particular,
\begin{enumerate}
\item[(1)] For $\lam(\ep)=\ff{1}{\ss\ep}$, it is corresponding to LDPs;
\item[(2)] For $\lam(\ep)\equiv 1$, it is associated with CLTs;
\item[(3)] For $\lam(\ep)\rightarrow \8$ and $ \ss\ep\lam(\ep)\rightarrow 0$   as $\ep\rightarrow
0,$ it is concerned with MDPs.

\end{enumerate}



 Let $V:\mathbb{R}^n\times\mathbb{R}^n\mapsto \mathbb{R}_+$ such that
\begin{equation}\label{suo}
V(x,y)\le K(1+|x|^{q}+|y|^{q}),\ \ \ \ x,y\in\mathbb{R}^n
\end{equation}
holds for some constants $K,q\ge 1$. \\
For any
$x_1,x_2,y_1,y_2\in\R^n,$    we assume that
\begin{enumerate}
\item[(\bf H1)]  There exists an $L>0$ such that
$$|b(x_1,y_1)-b(x_2,y_2)|+\|\sigma(x_1,y_1)-\sigma(x_2,y_2)\|_{\rm HS}\le L|x_1-x_2|+V(y_1,y_2)|y_1-y_2|,$$
where $\|\cdot\|_{\rm HS}$ stands  for the Hilbert-Schmidt norm.

\item[(\bf H2)]  $b(\cdot,\cdot)$ is Fr\'{e}chet differentiable w.r.t. each component, and there exists an $L_0>0$ such that
\begin{equation}\label{suo1}
\|\nabla^{(1)}b(x_1,\cdot)-\nabla^{(1)}b(x_2,\cdot)\| \le
L_0|x_1-x_2|,
\end{equation}
and
\begin{equation}\label{suo2}
\|\nabla^{(2)}b(\cdot,y_1)-\nabla^{(2)}b(\cdot,y_2)\| \le
V(y_1,y_2)|y_1-y_2|,
\end{equation}
in which $\nabla^{(i)}b(\cdot,\cdot)$ denotes  the gradient operator
w.r.t. the $i$'th variable.

\end{enumerate}
Under {\bf(H1)}, \eqref{eq1.1} admits a unique strong solution
$\{X^\ep(t)\}_{t\ge-\tau}$ (see, e.g., 
\cite[Lemma 2.1]{Bao}). For $b(x,y)=2x+3y^3$ and $\si(x,y)=4y^2,
x,y\in\R,$ it is easy to see that ({\bf H1}) and ({\bf H2}) hold,
respectively, with $V(x,y)=9(1+x^2+y^2)$ and $L=L_0=2$.



One of our  main results in this paper is presented as below.
\begin{thm}\label{th1.1}
  Under {\rm{(\bf{H1}) and (\bf {H2})}},
\bg{equation*} \E\Big(\sup_{0\le t\le
T}\Big|\ff{X^\ep(t)-X^0(t)}{\ss\ep}-Y(t)\Big|^2\Big)\le c\,\ep
\end{equation*}
for some constant $c>0.$ Herein,  $Y(t)$ solves
\begin{equation*}
\begin{split}
\d Y(t)=\{\nabla_{Y(t)}^{(1)}b(X^{0}(t),X^{0}(t-\tau))+\nabla_{Y(t-\tau)}^{(2)}b(X^{0}(t),X^{0}(t-\tau))\}\d t+\sigma(X^{0}(t),X^{0}(t-\tau))\d W(t)\\
\end{split}
\end{equation*}
where,~$\nabla_{x}^{(i)}$~is the gradient operator along the~$x$~direction,
with the initial value $Y(\theta)\equiv {\bf0_n}$, the zero vector
in $\R^n$, for any $\theta\in[-\tau,0]$, in which
$\{X^0(t)\}_{t\ge-\tau}$ is determined by \eqref{eq1.2}.
\end{thm}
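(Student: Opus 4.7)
The plan is to study the remainder $R^\ep(t):=(X^\ep(t)-X^0(t))/\ss\ep-Y(t)$, derive an SDDE for it via a first-order Taylor expansion of $b$, and close an $L^2$ estimate using Burkholder--Davis--Gundy (BDG) and Gronwall propagated on successive intervals of length $\tau$. The target is $\E[\sup_{0\le t\le T}|R^\ep(t)|^2]\le c\ep$.

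As a preliminary I would establish uniform (in $\ep$) moment bounds
$$\sup_{\ep\in(0,1)}\E\Big[\sup_{-\tau\le t\le T}\!\big(|X^\ep(t)|^p+|Y(t)|^p\big)\Big]+\sup_{-\tau\le t\le T}|X^0(t)|^p+\sup_{\ep\in(0,1)}\E\Big[\sup_{0\le t\le T}\Big|\ff{X^\ep(t)-X^0(t)}{\ss\ep}\Big|^p\Big]<\infty$$
for every $p\ge 2$. These follow from (\textbf{H1}), \eqref{suo}, and a BDG--Gronwall iteration on $[0,\tau],[\tau,2\tau],\dots$ (on each subinterval the delayed process is a known input), in the spirit of \cite[Lemma 2.1]{Bao}; they are the device that will later absorb the polynomial factor $V$.

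Writing $Z^\ep(t)=(X^\ep(t)-X^0(t))/\ss\ep$ and using the decomposition $b(X^\ep,X^\ep(t{-}\tau))-b(X^0,X^0(t{-}\tau))=[b(X^\ep,X^\ep(t{-}\tau))-b(X^0,X^\ep(t{-}\tau))]+[b(X^0,X^\ep(t{-}\tau))-b(X^0,X^0(t{-}\tau))]$, so that only one argument of $b$ is perturbed at each stage, the fundamental theorem of calculus yields
$$\ff{b(X^\ep,X^\ep(t{-}\tau))-b(X^0,X^0(t{-}\tau))}{\ss\ep}=\int_0^1\nabla^{(1)}b(X^0{+}r\ss\ep Z^\ep,X^\ep(t{-}\tau))\d r\cdot Z^\ep+\int_0^1\nabla^{(2)}b(X^0,X^0(t{-}\tau){+}r\ss\ep Z^\ep(t{-}\tau))\d r\cdot Z^\ep(t{-}\tau).$$
Adding and subtracting $\nabla^{(1)}b(X^0,X^0(t{-}\tau))Y$ and $\nabla^{(2)}b(X^0,X^0(t{-}\tau))Y(t{-}\tau)$, then subtracting the drift of the $Y$-equation, gives
$$\d R^\ep=\big\{\nabla^{(1)}b(X^0,X^0(t{-}\tau))R^\ep+\nabla^{(2)}b(X^0,X^0(t{-}\tau))R^\ep(t{-}\tau)+E^\ep\big\}\d t+[\sigma(X^\ep,X^\ep(t{-}\tau))-\sigma(X^0,X^0(t{-}\tau))]\d W,$$
with $R^\ep\equiv 0$ on $[-\tau,0]$, where $E^\ep$ collects the difference-of-gradient contributions of the form $[\nabla^{(i)}b(\text{perturbed})-\nabla^{(i)}b(X^0,X^0(t{-}\tau))]$ paired with $Z^\ep$ or $Z^\ep(t{-}\tau)$.

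Applying It\^o's formula to $|R^\ep|^2$, taking $\sup_{0\le s\le t}$, and using BDG: by (\textbf{H1}) the diffusion contribution is bounded by $2\ep L^2\int_0^t\E|Z^\ep(s)|^2\d s+2\ep\int_0^t\E[V(X^\ep(s{-}\tau),X^0(s{-}\tau))^2|Z^\ep(s{-}\tau)|^2]\d s=O(\ep)$ via Cauchy--Schwarz and the moment bounds. The term $\nabla^{(1)}b(X^0,X^0(t{-}\tau))R^\ep$ is absorbed in Gronwall since $|\nabla^{(1)}b|\le L$ by (\textbf{H1}); the delayed term $\nabla^{(2)}b(X^0,X^0(t{-}\tau))R^\ep(t{-}\tau)$ is handled by iterating on $[k\tau,(k+1)\tau]$, using $R^\ep|_{[-\tau,0]}\equiv 0$ and the moments on $X^0(t{-}\tau)$ to control $|\nabla^{(2)}b(X^0,X^0(t{-}\tau))|$ polynomially. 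The delicate piece is $E^\ep$: by \eqref{suo1}--\eqref{suo2} each of its terms is dominated by $\ss\ep\cdot[\text{polynomial in }X^0,X^\ep,Z^\ep]$, so after squaring and integrating, the moment bounds deliver $\E\int_0^T|E^\ep(s)|^2\d s\le c\ep$. I expect this $E^\ep$-estimate to be the main obstacle: it requires choosing the Taylor decomposition so that \eqref{suo1}--\eqref{suo2}, which each control $\nabla^{(i)}b$ in only the $i$-th argument, apply cleanly, and it requires high enough moments on $Z^\ep$ and $V$ to compensate the polynomial exponent $q$ in \eqref{suo}. Once the recursion $\E\sup_{u\le t}|R^\ep(u)|^2\le c\ep+C\int_0^t\E\sup_{u\le s}|R^\ep(u)|^2\d s$ is in place on each $[k\tau,(k+1)\tau]$, Gronwall and induction on $k$ close the proof.
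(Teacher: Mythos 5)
Your proposal follows essentially the same route as the paper: Theorem \ref{th1.1} is proved there as the $G\equiv{\bf0_n}$ case of Theorem \ref{th3.2}, using exactly your ingredients --- uniform $p$-th moment bounds for $X^\ep$, $X^0$, $(X^\ep-X^0)/\ss\ep$ and $Y$ obtained by iterating over delay intervals with growing exponents (Lemmas \ref{le3.4}--\ref{le3.3}), a one-argument-at-a-time Taylor decomposition of the drift whose second-order remainders are $O(\ep)$ after the high moments absorb the polynomial factor $V$, BDG for the diffusion difference, and Gronwall plus induction on $[k\tau,(k+1)\tau]$. The only substantive point is the one you already flag as the main obstacle: with your ordering the $\nabla^{(1)}$-piece is evaluated at the second argument $X^\ep(t-\tau)$, whose variation is not controlled by \eqref{suo1}; the paper instead freezes the delayed argument at $X^0(t-\tau)$ in the first difference and the current argument at $X^\ep(t)$ in the second, and then removes the residual cross term $\nabla^{(2)}_{Y(t-\tau)}b(X^\ep(t),X^0(t-\tau))-\nabla^{(2)}_{Y(t-\tau)}b(X^0(t),X^0(t-\tau))$ (its term $I_8$) via the mixed-derivative bound \eqref{66}.
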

In the sequel, we shall extend Theorems \ref{th1.1}
to SDDEs of neutral type
\begin{equation}\label{eq3.1}
\d\{X^\ep(t)-G(X^\ep(t-\tau))\}=b(X^\ep(t),X^\ep(t-\tau))\d
t+\ss\ep\sigma(X^\ep(t),X^\ep(t-\tau))\d W(t),~~~t>0,
\end{equation}
with the initial data $X^\ep(\theta)=\xi(\theta),
\theta\in[-\tau,0]$, where $G: \R^n\mapsto\R^n$ and the other
parameters are defined exactly  as in \eqref{eq1.1}.

\smallskip

As $\ep\downarrow 0, X^\ep(t)$, the solution to \eqref{eq3.1},
 tends to $X^0(t)$, which solves the deterministic
differential delay equation of neutral type
\begin{equation}\label{suo6}
\d\{X^0(t)-G(X^0(t-\tau))\}=b(X^0(t),X^0(t-\tau))\d t,\ \  t>0,\ \
X^0(\theta)=\xi(\theta),~~\theta\in[-\tau,0].
\end{equation}

\smallskip

Besides ({\bf H1}) and ({\bf H2}),  we further suppose that
\begin{enumerate}
 \item[({\bf H3})] $G(\cdot)$ is Fr\'{e}chet differentiable, and for any $x,y\in\R^n,$
\begin{equation}\label{suo3}
|G(x)-G(y)|\le V(x,y)|x-y|,
\end{equation}
and
\begin{equation}\label{suo4}
|\nabla  G(x)-\nabla G(y)|\le V(x,y)|x-y|,
\end{equation}
where $V(\cdot,\cdot)$ such that \eqref{suo} holds.
\end{enumerate}

Concerning  \eqref{eq3.1},   Theorem \ref{th1.1}   can be
generalized  as below.
\begin{thm}\label{th3.2}
 Under {\rm\textbf{(H1)}}-{\rm\textbf{(H3)}} ,
\bg{equation}\label{love7} \E\Big(\sup_{0\le t\le
T}\Big|\ff{X^\ep(t)-X^0(t)}{\ss\ep}-Y(t)\Big|^2\Big)\le c\ep,
\end{equation}
for some constant $c>0$. Herein, $Y(t)$ solves
\begin{equation}\label{eq3}
\begin{split}
\d\{Y(t)-\nabla_{Y(t-\tau)}G(X^0(t-\tau))\}
&=\{\nabla_{Y(t)}^{(1)}b(X^0(t),X^0(t-\tau))
\\
&~~+\nabla_{Y(t-\tau)}^{(2)}b(X^0(t),X^0(t-\tau))\}\d t\\
&~~+\sigma(X^0(t),X^0(t-\tau))\d W(t),~~~~t>0
\end{split}
\end{equation}
with the initial value $Y(\theta)\equiv {\bf0_n}$, for any
$\theta\in[-\tau,0],$ in which $\{X^0(t)\}_{t\ge -\tau}$ is
determined by \eqref{suo6}.
\end{thm}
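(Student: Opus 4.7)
The plan is to set $\eta^\ep(t) := \ep^{-1/2}(X^\ep(t) - X^0(t))$ and $Z^\ep(t) := \eta^\ep(t) - Y(t)$, and to show $\E\sup_{0\le t\le T}|Z^\ep(t)|^2 \le c\ep$ by subtracting the three equations \eqref{eq3.1}, \eqref{suo6} and \eqref{eq3}, then extracting leading-order linearisations of $b$ and $G$ around $(X^0(t),X^0(t-\tau))$ and controlling the Taylor remainders via the polynomial bound $V$.

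First I would establish uniform-in-$\ep$ a priori estimates: $\sup_{\ep\in(0,1)}\E\sup_{0\le t\le T}|X^\ep(t)|^p < \infty$ and $\E\sup_{0\le t\le T}|X^\ep(t) - X^0(t)|^{2p} \le c_p\,\ep^p$ for every $p\ge 1$. These follow by a standard BDG plus Gronwall argument applied inductively on intervals of length $\tau$: on each such interval the delay variable is already controlled from the previous step, so the polynomial growth of $V$ through \textbf{(H1)} and \textbf{(H3)} causes no blow-up. Subtracting \eqref{eq3.1} from \eqref{suo6}, dividing by $\sqrt\ep$, and then subtracting \eqref{eq3} gives an identity for $Z^\ep$ of the form
\begin{equation*}
\begin{split}
Z^\ep(t) - \nabla G(X^0(t-\tau))Z^\ep(t-\tau)
&= \int_0^t \bigl\{\nabla^{(1)} b(X^0(s),X^0(s-\tau))\,Z^\ep(s) \\
&\qquad\qquad + \nabla^{(2)} b(X^0(s),X^0(s-\tau))\,Z^\ep(s-\tau)\bigr\}\,\d s \\
&\quad + \mathcal R^\ep(t) + \mathcal M^\ep(t),
\end{split}
\end{equation*}
where $\mathcal R^\ep$ collects the Taylor remainders produced by expanding $b$ and $G$ around $(X^0(s),X^0(s-\tau))$, and $\mathcal M^\ep(t) := \int_0^t [\sigma(X^\ep(s),X^\ep(s-\tau)) - \sigma(X^0(s),X^0(s-\tau))]\,\d W(s)$.

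Using the integral form of the mean-value theorem together with \eqref{suo1}, \eqref{suo2} and \eqref{suo4}, each remainder contribution is schematically of order $V(\cdot,\cdot)\,|X^\ep - X^0|^2/\sqrt\ep$ evaluated at the relevant time and delayed time; Hölder's inequality combined with the high-moment bounds from the first step then yields $\E\sup_{0\le t\le T}|\mathcal R^\ep(t)|^2 \le c\,\ep$. For the martingale part, BDG together with \textbf{(H1)} gives $\E\sup_{0\le s\le t}|\mathcal M^\ep(s)|^2 \lesssim \int_0^t \E\{(1+V(\cdot,\cdot)^2)|X^\ep(s) - X^0(s)|^2 + |X^\ep(s-\tau) - X^0(s-\tau)|^2\}\,\d s \le c\,\ep$, again absorbing the polynomial factor through the $L^p$ bound on $X^\ep,X^0$.

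Finally I would close the estimate by induction on the delay windows. On $[0,\tau]$ the initial condition forces $Z^\ep(\cdot-\tau)\equiv 0$, so both the neutral LHS term and the $\nabla^{(2)} b$ delay term vanish, and Gronwall applied to $|Z^\ep(t)|^2$ (the drift coefficient $\nabla^{(1)} b(X^0(\cdot),X^0(\cdot-\tau))$ being bounded by \eqref{suo1} and \textbf{(H1)}) delivers $\E\sup_{0\le t\le\tau}|Z^\ep(t)|^2 \le c\,\ep$. Iterating onto $[k\tau,(k+1)\tau]$, with $\nabla G(X^0(t-\tau))Z^\ep(t-\tau)$ and the delay drift treated as known forcings controlled from the previous window, completes the proof. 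The main obstacle is the combination of the neutral structure and the polynomial growth of $V$: the term $\nabla G(X^0(t-\tau))Z^\ep(t-\tau)$ on the LHS blocks any one-shot Gronwall, and the $G$-remainder carries a $V$ factor that is only polynomially, not uniformly, bounded. Localising the neutral term via the stepwise induction across delay windows, and absorbing the $V$ factors through the high-moment a priori bounds of the first step, is precisely what reconciles the two difficulties.
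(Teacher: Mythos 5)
Your proposal is correct and follows essentially the same route as the paper's proof: high-moment a priori bounds on $X^\ep$, $X^0$ and on $(X^\ep-X^0)/\sqrt\ep$, a second-order Taylor expansion of $b$ and $G$ around the deterministic path with the remainders absorbed via H\"older's inequality and the polynomial bound on $V$, a B-D-G estimate for the stochastic integral, and a Gronwall-plus-induction closure over delay windows of length $\tau$ to handle the neutral term. The only cosmetic difference is that you expand $b$ in both variables simultaneously, whereas the paper telescopes through the intermediate point $(X^\ep(s),X^0(s-\tau))$, which produces an extra cross term ($I_8$) and obliges it to also bound the moments of $Y$ (its Lemma \ref{le3.3}); both variants close the estimate.
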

The outline of this work is organized as follows: In section 2, we give the proofs of the Theorems \ref{th1.1} and \ref{th3.2}; Section 3 is devoted to the moderate deviation principle for SDDEs, which allow the coefficients are highly nonlinear growth with respect to the variables; In section 4, we give two examples, which the coefficients are polynomial growth with respect to the variables. Throughout the paper,~$C$~is a generic constant, whose value may be different from line to line by convention, and we use the shorthand notation~$a\1b$~to mean~$a\le cb$.~
\section{Proofs of Theorems \ref{th1.1} and \ref{th3.2}}\label{sec2}


Before we complete  proofs of our main results, we
prepare several auxiliary lemmas. Throughout this section, we point out that $\{X^\ep(t)\},
  \{X^0(t)\}$ and $\{Y(t)\}$ below solve \eqref{eq3.1}, \eqref{suo6}, and
  \eqref{eq3}, respectively.

The lemma below show that $\{X^\ep(t)\}$, $\{X^0(t)\}$,  the solutions to
\eqref{eq3.1}, \eqref{suo6}, respectively, are uniformly bounded in $p$-th moment sense in a
finite horizon. 

\begin{lem}\label{le3.4} Under  {\rm ({\bf H1})} and \eqref{suo3},
\begin{equation}\label{love}\E\Big(\sup_{0\le t\le T}|X^\ep(t)|^p\Big)\vee\Big(\sup_{0\le
t\le T}|X^0(t)|^p\Big)\le C,~~~~ p\ge
2,~\ep\in(0,1),\end{equation}
where~$C$~is a constant, which depends on $\|\xi\|_\8:=\sup_{-\tau\le\theta\le0}|\xi(\theta)|$.
\end{lem}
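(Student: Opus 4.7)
The plan is to prove the bound by induction on intervals of length $\tau$, exploiting the observation that on $[k\tau,(k+1)\tau]$ the delay argument $X^\ep(t-\tau)$ equals the solution on the previous interval $[(k-1)\tau,k\tau]$, which by the inductive hypothesis already possesses moments of every order. The key point is that under $({\bf H1})$ the coefficients $b,\sigma$ are globally Lipschitz in the non-delayed variable with constant $L$, but only polynomially growing in the delayed variable through the factor $V(y_1,y_2)$; similarly \eqref{suo3} forces $G$ to grow at most polynomially. The step-by-step scheme treats the ``bad'' polynomial contribution from the delay as a known process with finite moments rather than an unknown to be closed by Gronwall.

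More precisely, setting $x_2=y_2=0$ in $({\bf H1})$ and invoking \eqref{suo}, together with the analogous reduction for $G$ via \eqref{suo3}, one obtains
$$|b(x,y)|+\|\sigma(x,y)\|_{\rm HS}\lesssim 1+|x|+|y|^{q+1},\qquad |G(x)|\lesssim 1+|x|^{q+1}.$$
On $[0,\tau]$ the delay $X^\ep(t-\tau)=\xi(t-\tau)$ is deterministic and bounded by $\|\xi\|_\8$, so the equation becomes
$$X^\ep(t)=\xi(0)-G(\xi(-\tau))+G(\xi(t-\tau))+\int_0^t b(X^\ep(s),\xi(s-\tau))\,\d s+\ss\ep\int_0^t\sigma(X^\ep(s),\xi(s-\tau))\,\d W(s),$$
whose coefficients are uniformly bounded plus linearly dominated by $|X^\ep(s)|$. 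The standard combination of Burkholder--Davis--Gundy on the stochastic integral, H\"older on the drift, and Gronwall's lemma yields $\E\sup_{0\le t\le\tau}|X^\ep(t)|^{p'}\le C_{p'}$ for every $p'\ge 2$, with $C_{p'}$ depending on $\|\xi\|_\8$ but not on $\ep\in(0,1)$.

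For the induction step from $[0,k\tau]$ to $[k\tau,(k+1)\tau]$, assume all moments of $\sup_{0\le s\le k\tau}|X^\ep(s)|$ are uniformly bounded in $\ep$. Writing
$$X^\ep(t)=X^\ep(k\tau)-G(X^\ep((k-1)\tau))+G(X^\ep(t-\tau))+\int_{k\tau}^t b(X^\ep(s),X^\ep(s-\tau))\,\d s+\ss\ep\int_{k\tau}^t\sigma(X^\ep(s),X^\ep(s-\tau))\,\d W(s),$$
taking $p$-th powers, supremum over $t\in[k\tau,(k+1)\tau]$, expectation, and again applying BDG and H\"older, one arrives at
$$\E\sup_{k\tau\le u\le t}|X^\ep(u)|^p\lesssim 1+\E\sup_{(k-1)\tau\le s\le k\tau}|X^\ep(s)|^{p(q+1)}+\int_{k\tau}^t\E\sup_{k\tau\le u\le s}|X^\ep(u)|^p\,\d s,$$
where the middle term is finite by the inductive hypothesis (it encodes the polynomial growth of $b,\sigma,G$ in the delay variable on $[(k-1)\tau,k\tau]$). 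Gronwall's inequality closes the induction, and since $T<\8$ only finitely many iterations are needed. The estimate for $X^0$ follows from the same scheme with the stochastic integral and BDG step removed.

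The main obstacle is precisely the polynomial growth of $V$ in the delayed variable: a direct Gronwall argument on $[0,T]$ would require bounding $\E|X^\ep(s-\tau)|^{p(q+1)}$ by the $p$-th moment being estimated, which is circular since $p(q+1)>p$. The interval-by-interval strategy sidesteps this by feeding the moments from the previous step as input data into the current step, so the higher-order delay term enters the Gronwall inequality as a finite constant rather than as an unknown.
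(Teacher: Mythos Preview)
Your proposal is correct and follows essentially the same route as the paper: derive the growth bounds $|b(x,y)|+\|\sigma(x,y)\|_{\rm HS}\lesssim 1+|x|+|y|^{q+1}$ and $|G(x)|\lesssim 1+|x|^{q+1}$, then run an interval-by-interval induction on $[k\tau,(k+1)\tau]$ so that the polynomial delay contribution enters the Gronwall step as a finite constant inherited from the previous interval. The only cosmetic difference is that the paper tracks an explicit decreasing sequence of exponents $p_i=([T/\tau]+2-i)\,p\,(1+q)^{[T/\tau]+1-i}$ with $p_{[T/\tau]+1}=p$ and uses H\"older to step from $p_i$ to $p_{i+1}$, whereas you carry the stronger hypothesis ``all moments are bounded'' through the induction; both formulations encode the same idea.
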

\begin{proof}
 From {\rm{(\bf H1)}}, a straightforward calculation gives that
\begin{equation}\label{eq2.1}
\begin{split}
|b(x,y)|+\|\sigma(x,y)\|_{\rm HS}\11+|x|+|y|^{q+1},~~~x,y\in\R^n.
\end{split}
\end{equation}
Hereinafter, $q\ge1$ is
given in \eqref{suo}. Set $r:=1+q$ for notational simplicity and let
$t\in[0,T]$ be arbitrary. From \eqref{eq2.1} and \eqref{suo3},
  we obtain that for any $p\ge 2$ and
  $\ep\in(0,1)$,
\begin{equation*}
\begin{split}
\E\Big(\sup_{0\le s\le t}|X^\ep(s)|^p\Big)&\1\E|\xi(0)-G(\xi(-\tau))|^p+\E\Big(\sup_{0\le s\le t}|G(X^\ep(s-\tau)|^p\Big)\\
&~~+\E\Big(\sup_{0\le s\le t}\Big|\int_0^s b(X^\ep(u),X^\ep(u-\tau))\d u\Big|^p\Big)\\
&~~+\ep^{p/2}\E\Big(\sup_{0\le s\le t}\Big|\int_0^s\sigma(X^\ep(u),X^\ep(u-\tau))\d W(u)\Big|^p\Big)\\
&\11+\|\xi\|_\8^{pr}+\E\Big(\sup_{0\le s\le  (t-\tau)\vee0 }|X^\ep(s)|^{pr}\Big)\\
&~~+\int_0^t\E|b(X^\ep(s),X^\ep(s-\tau))|^p\d s+\ep^{p/2}\int_0^t\E\|\sigma(X^\ep(s),X^\ep(s-\tau))\|_{\rm HS}^p\d s\\
&\11+\|\xi\|_\8^{pr}+\int_0^t\E|X^\ep(s)|^p\d
s+\int_0^{(t-\tau)\vee0}\E|X^\ep(s)|^{pr}\d s+\E\Big(\sup_{0\le s\le
(t-\tau)\vee0}|X^\ep(s)|^{pr}\Big),
\end{split}
\end{equation*}
where $a\vee b:=\max\{a,b\}$ for $a,b\in\R.$ By the Gronwall
inequality, one has
\begin{equation}\label{e1}
\E\Big(\sup_{0\le s\le t}|X^\ep(s)|^p\Big)\1
1+\|\xi\|_\8^{pr}+\E\Big(\sup_{0\le s\le
(t-\tau)\vee0}|X^\ep(s)|^{pr}\Big)+\int_0^{(t-\tau)\vee0}\E|X^\ep(s)|^{pr}\d
s.
\end{equation}
 Let
$$p_i=([T/\tau]+2-i)p r^{[T/\tau]+1-i},~~~~~i=1,2,\cdots,[T/\tau]+1.$$
It is easy to see that $p_i\ge 2$ such that
$$rp_{i+1}<p_i~~~~~\mbox{and}~~~~~p_{[T/\tau]+1}=p,~~~i=1,2,\cdots,[T/\tau].$$
We obtain from \eqref{e1} that
\begin{equation*}
\E\Big(\sup_{0\le s\le\tau}|X(s)|^{p_1}\Big)\11+\|\xi\|_\8^{p_1r}.
\end{equation*}
This further yields by H\"older's inequality that
\begin{equation*}
\begin{split}
\E\Big(\sup_{0\le s\le 2\tau}|X^\ep(s)|^{p_2}\Big)
&\11+\E\Big(\sup_{0\le s\le \tau} |X^\ep(s)|^{p_2r}\Big)+\int_0^{\tau}\E|X^\ep(s)|^{p_2r}\d s\\
&\11+\Big(\E\Big(\sup_{0\le s\le \tau}|X^\ep(s)|^{p_1}\Big)\Big)^{\ff{p_2r}{p_1}}+\int_0^{\tau}\Big(\E|X^\ep(s)|^{p_1}\Big)^{\ff{p_2r}{p_1}}\d s\\
&\11+\|\xi\|_\8^{p_2r^2}.
\end{split}
\end{equation*}
Repeating the previous procedures yields that
\begin{equation}\label{love2}\E\Big(\sup_{0\le t\le T}|X^\ep(t)|^p\Big)\1
1+\|\xi\|_\8^{p(1+q)^{[T/\tau]+1}},~~~~ p\ge 2,~\ep\in(0,1),\end{equation} which
further leads to
$$\Big(\sup_{0\le t\le T}|X^0(t)|^p\Big)\1 1+\|\xi\|_\8^{p(1+q)^{[T/\tau]+1}},~~~~ p\ge 2$$
by letting $\ep$ go to zero. The proof is therefore complete.
\end{proof}

The following lemma provides the order of deviation between $X^\ep$
and $X^0$.

\begin{lem}\label{le3.2}
 Under  {\rm ({\bf H1})} and \eqref{suo3}, for any $p\ge2$ there is a constant $C_{p,T}>0$, such that
\bg{equation}\label{eq1}
 \E\Big(\sup_{0\le t\le
T}\Big|\ff{X^\ep(t)-X^0(t)}{\ss\ep}\Big|^p\Big)\le C_{p,T},
\end{equation}
\end{lem}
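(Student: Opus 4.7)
The plan is to work directly from the stochastic integral equation for the rescaled difference $Z^\ep(t):=(X^\ep(t)-X^0(t))/\ss\ep$ and then iterate over the delay windows $[0,\tau],[\tau,2\tau],\ldots,[N\tau,T]$ with $N=[T/\tau]$, using larger moments on earlier windows.

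First, by subtracting \eqref{suo6} from \eqref{eq3.1} and dividing by $\ss\ep$, I would write
\begin{equation*}
Z^\ep(t)=\ff{G(X^\ep(t-\tau))-G(X^0(t-\tau))}{\ss\ep}+\int_0^t\ff{b(X^\ep,X^\ep(\cdot-\tau))-b(X^0,X^0(\cdot-\tau))}{\ss\ep}\d s+\int_0^t\sigma(X^\ep,X^\ep(\cdot-\tau))\d W.
\end{equation*}
Applying \eqref{suo3} and (H1) to express the $G$-difference and the $b$-difference in terms of $Z^\ep$ and the polynomial weight $V$, and invoking BDG for the stochastic integral together with the bound $\|\sigma\|_{\rm HS}\lesssim 1+|x|+|y|^{q+1}$ from \eqref{eq2.1}, I obtain, for $t$ in a fixed window,
\begin{equation*}
\E\sup_{0\le s\le t}|Z^\ep(s)|^p\1 \E\sup_{0\le s\le t}V^p|Z^\ep(s-\tau)|^p+\int_0^t\E|Z^\ep(s)|^p\d s+\int_0^t\E\bigl[V^p|Z^\ep(s-\tau)|^p\bigr]\d s+C,
\end{equation*}
where $V=V(X^\ep(s-\tau),X^0(s-\tau))$ and the constant $C$ controls the martingale part via Lemma \ref{le3.4}.

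Next I would handle the polynomial weight $V$ by Cauchy--Schwarz: since $V^{2p}\lesssim 1+|X^\ep|^{2pq}+|X^0|^{2pq}$, Lemma \ref{le3.4} gives $\E\sup_{[0,T]}V^{2p}\le C$, so each product $\E[V^p|Z^\ep(s-\tau)|^p]$ is dominated by $\bigl(\E|Z^\ep(s-\tau)|^{2p}\bigr)^{1/2}$, and similarly $\E\sup_{0\le s\le t}V^p|Z^\ep(s-\tau)|^p\le(\E\sup V^{2p})^{1/2}(\E\sup_{0\le s\le t}|Z^\ep(s-\tau)|^{2p})^{1/2}$. On the first window $[0,\tau]$ the delay term is deterministic ($X^\ep(s-\tau)=\xi(s-\tau)=X^0(s-\tau)$), so all weighted delay contributions vanish and what remains is $\E\sup_{0\le s\le t}|Z^\ep(s)|^p\lesssim C+\int_0^t\E|Z^\ep(s)|^p\d s$; Gronwall yields a bound valid for every $p\ge 2$.

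Finally, I iterate: assuming a $2p$-th moment bound of $Z^\ep$ on $[0,k\tau]$, the Cauchy--Schwarz estimates above turn the delay contributions on $[k\tau,(k+1)\tau]$ into constants, so Gronwall produces a $p$-th moment bound on $[0,(k+1)\tau]$. Since $[0,T]$ is covered by finitely many windows and the first window admits every moment, choosing a decreasing sequence of moments (as in the proof of Lemma \ref{le3.4} with $p_i=([T/\tau]+2-i)p\,2^{[T/\tau]+1-i}$) propagates the estimate to $[0,T]$. The main technical hurdle is precisely this moment-bookkeeping: the polynomial weight $V$ forces a doubling of moments at each delay step, and one has to verify that the inductive scheme terminates after finitely many steps while the bounds from Lemma \ref{le3.4} remain finite at the highest moment required.
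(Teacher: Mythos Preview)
Your proposal is correct and follows essentially the same route as the paper: you write the integral equation for $Z^\ep$, apply ({\bf H1}) and \eqref{suo3} together with BDG, separate the polynomial weight $V$ from the delayed $Z^\ep$-factor by Cauchy--Schwarz (controlling $\E\sup V^{2p}$ via Lemma~\ref{le3.4}), and then run Gronwall plus the delay-window induction with the moment-doubling bookkeeping. The paper does exactly this, arriving at the recursion
\[
\E\Big(\sup_{0\le s\le t}|Y^\ep(s)|^p\Big)\lesssim 1+\Big(\E\Big(\sup_{0\le s\le (t-\tau)\vee0}|Y^\ep(s)|^{2p}\Big)\Big)^{1/2}+\int_0^{(t-\tau)\vee0}(\E|Y^\ep(s)|^{2p})^{1/2}\d s
\]
and then invoking the same inductive moment scheme as in Lemma~\ref{le3.4}.
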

\begin{proof}
For notational simplicity, set \begin{equation}\label{love5} Y^\ep
:= \ff{X^\ep-X^0}{\ss\ep}.
\end{equation} Since \eqref{eq3.1} and
\eqref{suo6} share the same initial value, one has
$Y^\ep(\theta)\equiv{\bf 0_n}$ for any $\theta\in[-\tau,0].$ In
terms of \eqref{love}, one gets from \eqref{suo} that, for each
$l\ge1$, there exits a constant $C_{l,T}>0$ such that
\begin{equation}\label{love1}
\E\Big(\sup_{-\tau\le t\le T} V^l(X^\ep(t),X^0(t))\Big)\le C_{l,T}.
\end{equation}
By 
 the elementary inequality:
 \begin{equation}\label{eq2.3}
(a_1+a_2+\cdots+a_m)^l\le
m^{l-1}(a_1^l+a_2^l+\cdots+a_m^l),~~~l\ge1,~a_i\ge 0,
\end{equation}
the B-D-G inequality as well as the H\"older inequality, we obtain
{\rm ({\bf H1})} and \eqref{suo3} that for $p\ge 2$ and $t\in[0,T]$
\begin{equation*}
\begin{split}
\E\Big(\sup_{0\le s\le t}|Y^\ep(s)|^p\Big)&\1\E\Big(\sup_{0\le s\le t}\Big|\ff{G(X^\ep(s-\tau))-G(X^0(s-\tau))}{\ss\ep}\Big|^p\Big)\\
&~~+\E\Big(\sup_{0\le s\le t}\Big|\int_0^s\ff{b(X^{\ep}(r),X^{\ep}(r-\tau))-b(X^0(r),X^0(r-\tau))}{\ss\ep}\d r\Big|^p\Big)\\
&~~+\E\Big(\sup_{0\le s\le t}|\int_0^s\sigma(X^{\ep}(r),X^{\ep}(r-\tau))\d W(r)|^p\Big)\\
&\1\Big(\E\Big(\sup_{-\tau\le s\le t-\tau}V^{2p}(X^\ep(s),X^0(s))\Big)\Big)^{\ff{1}{2}}\Big(\E\Big(\sup_{0\le s\le (t-\tau)\vee0}|Y^\ep(s)|^{2p}\Big)\Big)^{\ff{1}{2}}\\
&~~+\int_0^t\E|Y^\ep(s)|^p\d s+\int_0^{(t-\tau)\vee0}(\E
V^{2p}(X^\ep(s),X^0(s)))^{\ff{1}{2}}
(\E|Y^\ep(s)|^{2p})^{\ff{1}{2}}\d s\\
&~~+\int_0^t\{1+\E|X^\ep(s)|^p+\E|X^\ep(s-\tau)|^{p(q+1)}\}\d s\\
&\11+\Big(\E\Big(\sup_{0\le s\le
(t-\tau)\vee0}|Y^\ep(s)|^{2p}\Big)\Big)^{\ff{1}{2}}
+\int_0^t\E|Y^\ep(s)|^p\d
s+\int_0^{(t-\tau)\vee0}(\E|Y^\ep(s)|^{2p})^{\ff{1}{2}}\d s,
\end{split}
\end{equation*}
where in the last display we have used \eqref{love} and
\eqref{love1}. So,   Gronwall's inequality gives that
\begin{equation}\label{eq2.4}
\E\Big(\sup_{0\le s\le t}|Y^\ep(s)|^p\Big)\11+\Big(\E\Big(\sup_{0\le
s\le
(t-\tau)\vee0}|Y^\ep(s)|^{2p}\Big)\Big)^{\ff{1}{2}}+\int_0^{(t-\tau)\vee0}(\E|Y^\ep(s)|^{2p})^{\ff{1}{2}}\d
s.
\end{equation}
In what follows, by mimicking the argument of \eqref{love2}, the
proof of Lemma \ref{le3.2} can be done.
\end{proof}
\begin{lem}\label{le3.3}
 Under   {\rm ({\bf H1})} and  {\rm ({\bf H2})}, for any $p\ge2$ there exists a constant $\tt C_{p,T}>0$ such that
\begin{equation}\label{eq2}
\E\Big(\sup_{0\le t\le T}|Y(t)|^p\Big)\le \tt C_{p,T}.
\end{equation}
\end{lem}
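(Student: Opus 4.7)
The plan is to reduce \eqref{eq3} to a linear SDDE of neutral type with deterministically bounded coefficients, and then iterate a Gronwall-type bound on the consecutive delay intervals $[k\tau,(k+1)\tau]$.

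First, I would observe that $\{X^0(t)\}_{t\in[-\tau,T]}$ is a deterministic continuous function (it solves the deterministic delay ODE \eqref{suo6}), and by Lemma \ref{le3.4} is uniformly bounded on $[-\tau,T]$ with a bound depending only on $T$ and $\|\xi\|_\infty$. Combining this with (\textbf{H1}), (\textbf{H2}) and (\textbf{H3}), I would derive deterministic bounds
\[
\|\nabla^{(1)}b(X^0(t),X^0(t-\tau))\|\le L,\quad \|\nabla^{(2)}b(X^0(t),X^0(t-\tau))\|\le M,\quad \|\nabla G(X^0(t-\tau))\|\le M,
\]
as well as $\|\sigma(X^0(t),X^0(t-\tau))\|_{\rm HS}\le M$ uniformly in $t\in[0,T]$, for some constant $M=M(T,\|\xi\|_\infty)$; the bounds for $\nabla^{(2)}b$ and $\nabla G$ come from letting the two arguments coincide in (\textbf{H1}) and \eqref{suo3} so that $V(y,y)\le K(1+2|y|^q)$, then using boundedness of $X^0$, and the bound on $\sigma$ uses \eqref{eq2.1}.

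Next, I would integrate \eqref{eq3} and use $Y\equiv\mathbf{0}_n$ on $[-\tau,0]$ to write
\[
Y(t)=\nabla_{Y(t-\tau)}G(X^0(t-\tau))+\int_0^t\!\big\{\nabla^{(1)}_{Y(s)}b+\nabla^{(2)}_{Y(s-\tau)}b\big\}\d s+\int_0^t\sigma(X^0(s),X^0(s-\tau))\d W(s).
\]
Applying \eqref{eq2.3}, the Burkholder--Davis--Gundy inequality to the stochastic integral, and H\"older's inequality to the drift integral, together with the deterministic bounds above, I would obtain, for $\phi(t):=\E\sup_{0\le s\le t}|Y(s)|^p$,
\[
\phi(t)\le C_1 M^p\,\phi\big((t-\tau)\vee 0\big)+C_2\int_0^t\phi(s)\,\d s+C_3\int_0^{(t-\tau)\vee 0}\phi(s)\,\d s+C_4,
\]
valid for all $t\in[0,T]$, with constants $C_i=C_i(p,T,L,M)$.

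Finally, I would iterate on the intervals $[k\tau,(k+1)\tau]$, $k=0,1,\dots,[T/\tau]$. On $[0,\tau]$ the delayed term $\phi((t-\tau)\vee0)$ vanishes and Gronwall's inequality immediately gives $\phi(\tau)<\infty$. For the inductive step, assuming $\phi(k\tau)<\infty$, the neutral contribution $C_1M^p\phi(k\tau)$ is absorbed into a constant and Gronwall yields $\phi((k+1)\tau)<\infty$; after finitely many iterations one concludes $\phi(T)\le\tilde C_{p,T}$. The main obstacle is the neutral term $\nabla_{Y(t-\tau)}G(X^0(t-\tau))$, which sits outside the time integral and hence cannot be absorbed by a single direct application of Gronwall; this is precisely what forces the step-by-step induction on delay intervals, mirroring the iteration scheme used in the proof of Lemma \ref{le3.4}.
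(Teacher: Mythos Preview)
Your proposal is correct and follows essentially the same route as the paper: bound $\nabla^{(1)}b$, $\nabla^{(2)}b$, $\nabla G$ and $\sigma$ along the deterministic trajectory $X^0$ using {\bf(H1)} and \eqref{suo3}, apply B--D--G and H\"older to the integrated form of \eqref{eq3}, and then close with Gronwall plus induction over the delay intervals. The only cosmetic difference is that you absorb $V(X^0(\cdot),X^0(\cdot))$ into a single constant $M$ upfront, whereas the paper carries the $V$-factors through and invokes Lemma~\ref{le3.4} at the end; also note that {\bf(H2)} is not actually needed for the bounds you list---they follow from {\bf(H1)} and \eqref{suo3} alone.
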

\begin{proof}
In view of {\bf (H1)}, we find that for any $x,y,z\in\R^n,$
\begin{equation}\label{love3}
|\nabla_z^{(1)}b(x,y)|\le L|z|~~~~~~~\mbox{ and
}~~~~~~~~|\nabla_z^{(2)}b(x,y)|\le V(y,y)|z|.
\end{equation}
On the other hand, from \eqref{suo3} we have
\begin{equation}\label{love4}
|\nabla_z G(y)|\le V(y,y)|z|,~~~~~~y,z\in\R^n.
\end{equation}
Recall that $Y(\theta)={\bf0_n}$ for any $ \theta\in[-\tau,0]$.
Then, by B-D-G's inequality and H\"older's inequality, in addition
to \eqref{eq2.1}, \eqref{love3} as well as \eqref{love4}, we deduce
that for any $p\ge2$
\begin{align*}
\E\Big(\sup_{0\le s\le t}|Y(s)|^p\Big)
&\1\E\Big(\sup_{0\le s\le
(t-\tau)\vee0}V^{p}(X^0(s),X^0(s))|Y(s)|^{p}\Big)\\
&~~+\int_0^t\E\{1+|X^0(s)|+|X^0(s-\tau)|^{q+1}\}^p\d s\\
&~~+\int_0^t\E|Y(s)|^p\d s+\int_0^{(t-\tau)\vee0}\E V^{p}(X^0(s),X^0(s))|Y(s)|^{p}\d s\\
&\11+\E\Big(\sup_{0\le s\le
t-\tau}|Y(s)|^{p}\Big)+\int_0^t\E|Y(s)|^p\d
s+\int_0^{(t-\tau)\vee0}\E|Y(s)|^{p}\d s\\
&\11+\E\Big(\sup_{0\le s\le
t-\tau}|Y(s)|^{p}\Big)+\int_0^t\E|Y(s)|^p\d s,
\end{align*}
where in the last second inequality we have utilized \eqref{love}.  Then,
the desired assertion is available by the Gronwall's inequality and an induction argument.
\end{proof}

With Lemmas \ref{le3.4}-\ref{le3.3} in hand, we are now in position to complete the proof of Theorem \ref{th3.2}.
\begin{proof}[{\bf Proof of Theorem \ref{th3.2}}]
Let $Y^\ep$ be defined as in \eqref{love5}. Thanks to
$Y^\ep(\theta)=Y(\theta)\equiv{\bf0_n}$ for any
$\theta\in[-\tau,0]$, it follows that
\begin{equation*}
\begin{split}
Y^\ep(t)-Y(t)&=\ff{G(X^\ep(t-\tau))-G(X^0(t-\tau))}{\ss\ep}-\nabla_{Y^\ep(t-\tau)}G(X^0(t-\tau))\\
&~~+\int_0^t\Big\{\ff{b(X^{\ep}(s),X^{0}(s-\tau))-b(X^{0}(s),X^{0}(s-\tau))}{\ss\ep}
-\nabla_{Y^\ep(s)}^{(1)}b(X^{0}(s),X^{0}(s-\tau))\Big\}\d s\\
&~~+\int_0^t\Big\{\ff{b(X^{\ep}(s),X^{\ep}(s-\tau))-b(X^{\ep}(s),X^{0}(s-\tau))}{\ss\ep}
-\nabla_{Y^\ep(s-\tau)}^{(2)}b(X^{\ep}(s),X^{0}(s-\tau))\Big\}\d s\\
&~~+\int_0^t\{\sigma(X^{\ep}(s),X^{\ep}(s-\tau))-\sigma(X^{0}(s),X^{0}(s-\tau))\}\d W(s)\\
&~~+\nabla_{Y^\ep(t-\tau)-Y(t-\tau)}G(X^0(t-\tau))\\
&~~+\int_0^t\nabla_{Y^\ep(s)-Y(s)}^{(1)}b(X^{0}(s),X^{0}(s-\tau))\d s\\
&~~+\int_0^t\nabla_{Y^\ep(s-\tau)-Y(s-\tau)}^{(2)}b(X^{\ep}(s),X^{0}(s-\tau))\d s\\
&~~+\int_0^t\Big\{\nabla_{Y(s-\tau)}^{(2)}b(X^{\ep}(s),X^{0}(s-\tau))
-\nabla_{Y(s-\tau)}^{(2)}b(X^{0}(s),X^{0}(s-\tau))\Big\}\d s\\
&=:\sum_{i=1}^8I_i(t).
\end{split}
\end{equation*}
Observe from {\bf(H2)} that for any $x,y,z\in\R^n$,
\begin{equation}\label{66}
\begin{split}
&|\nabla_z^{(1)}\nabla_z^{(i)}b(x,y)|\le L_0|z|^2,~~~i=1,2,
~~~|\nabla_z^{(2)}\nabla_z^{(2)} b(x,y)|\le V(y,y)|z|^2,
\end{split}
\end{equation}
and from {\bf(H3)} that
\begin{equation}\label{love66}
|\nabla_y\nabla_yG(x)|\le V(x,x)|y|^2.
\end{equation}
For notational simplicity, set $J_i(t):=\E\Big(\sup_{0\le s\le
t}|I_i(s)|^2\Big)$. To achieve the desired assertion, in
what follows we intend to estimate $J_i(t)$ one-by-one. By  Taylor's
expansion and H\"older's inequality,   together with \eqref{love4}
and \eqref{love66}, we infer from \eqref{love} and \eqref{eq2} that
\begin{equation*}
\begin{split}
J_1(t)+J_5(t)&\1\ep\E\Big(\sup_{0\le s\le (t-\tau)\vee0}|\nabla_{Y^\ep(s)}\nabla_{Y^\ep(s)}G(X^0(s)+u^\ep(s))|^2\Big)\\
&~~+\E\Big(\sup_{0\le s\le (t-\tau)\vee0}|\nabla_{Y^\ep(s)-Y(s)}G(X^0(s))|^2\Big)\\
&\1\ep\E\Big(\sup_{0\le s\le (t-\tau)\vee0}V^2(X^0(s)+u^\ep(s),X^0(s)+u^\ep(s))|Y^\ep(s)|^{4}\Big)\\
&~~+\E\Big(\sup_{0\le s\le (t-\tau)\vee0}|Y^\ep(s)-Y(s)|^2V^2(X^0(s),X^0(s))\Big)\\
&\1\ep\Big(\E\Big(\sup_{0\le s\le (t-\tau)\vee0}V^{4}(X^0(s)+u^\ep(s),X^0(s)+u^\ep(s))\Big)\Big)^{1/2}\Big(\E\Big(\sup_{0\le s\le (t-\tau)\vee0}|Y^\ep(s)|^{8}\Big)\Big)^{\ff{1}{2}}\\
&~~+\E\Big(\sup_{0\le s\le (t-\tau)\vee0}|Y^\ep(s)-Y(s)|^{2}\Big)\\
&\1\ep+\E\Big(\sup_{0\le s\le
(t-\tau)\vee0}|Y^\ep(s)-Y(s)|^{2}\Big),
\end{split}
\end{equation*}
where
\begin{equation}\label{qiang}
u^\ep(s):=\theta^\ep(s)(X^{\ep}(s)-X^{0}(s)),~~~~s\in[0,t]
\end{equation}
with
$\theta^\ep\in(0,1)$ being a random variable. Also,
with the aid of the Taylor expansion and the H\"older inequality, along with \eqref{love}, \eqref{eq1}, \eqref{love1}, \eqref{eq2}, \eqref{66}, and \eqref{qiang}, we derive that  
\begin{equation*}
\begin{split}
  J_2(t)+J_3(t)+J_8(t)&\1 \ep\int_0^t\E|\nabla_{Y^\ep(s)}^{(1)}
\nabla_{Y^\ep(s)}^{(1)}b(X^{0}(s)+u^\ep(s),X^{0}(s-\tau))|^2\d s\\
&~~~+\ep\int_0^t\E|\nabla_{Y^\ep(s-\tau)}^{(2)}
\nabla_{Y^\ep(s-\tau)}^{(2)}b(X^{\ep}(s),X^{0}(s-\tau)+u^\ep(s-\tau))|^2\d s\\
&~~+\ep\E\int_0^t|\nabla_{Y^\ep(s)}^{(1)}\nabla_{Y(s-\tau)}^{(2)}b(X^{0}(s)+u^\ep(s),X^{0}(s-\tau))|^2\d s\\
&\1\ep\int_0^t\{\E|Y^\ep(s)|^{4}+\E|Y^\ep(s)\cdot Y(s-\tau)|^{2}\}\d s\\
&~~+\ep\int_0^{(t-\tau)\vee0}(\E (V^{4}(X^{0}(s)+u^\ep(s),X^0(s)+u^\ep(s))))^{1/2}(\E|Y^\ep(s)|^{8})^{1/2}\d s\\
&\1\ep.
\end{split}
\end{equation*}
Using B-D-G's inequality and H\"older's inequality and taking
\eqref{eq1} and \eqref{love1}  into consideration yields that
\begin{equation*}
\begin{split}
J_4(t)&\1 \int_0^t\E\|\sigma(X^{\ep}(s),X^{\ep}(s-\tau))-\sigma(X^{0}(s),X^{0}(s-\tau))\|_{\rm HS}^2\d s\\
&\1\int_0^t\E|X^{\ep}(s)-X^{0}(s)|^2\d s
+\int_0^{(t-\tau)\vee0}\E V^2(X^{\ep}(s),X^0(s))|X^{\ep}(s)-X^{0}(s)|^2\d s\\
&\1\ep \int_0^t\E|Y^{\ep}(s)|^2\d s+\ep
\int_0^{(t-\tau)\vee0}(\E V^{4}(X^{\ep}(s),X^0(s)))^{\ff{1}{2}}(\E|Y^{\ep}(s)|^{4})^{\ff{1}{2}}\d s \\
&\1\ep.
\end{split}
\end{equation*}
With the help of \eqref{love} and \eqref{love3},
\begin{equation*}
\begin{split}
\sum_{i=6}^7J_i(t)
&\1\int_0^t\E|Y^\ep(s)-Y(s)|^2\d s+\int_0^{(t-\tau)\vee0}\E (V^2(X^0(s),X^0(s))|Y^\ep(s)-Y(s)|^2)\d s\\
&\1\int_0^t\E|Y^\ep(s)-Y(s)|^2\d s.
\end{split}
\end{equation*}
Hence, we arrive at
\begin{equation}\label{eq3.2}
\begin{split}
&\E\Big(\sup_{0\le s\le
t}|Y^\ep(s)-Y(s)|^2\Big)\1\ep+\E\Big(\sup_{0\le s\le
(t-\tau)\vee0}|Y^\ep(s)-Y(s)|^{2}\Big).
\end{split}
\end{equation}
Then, with the induction argument, we get the desired assertion.
 \end{proof}

\begin{proof}[{\bf Proof of Theorem \ref{th1.1}}]
The proof of Theorem \ref{th1.1} can be done by taking
$G\equiv{\bf0_n}$ in the argument of Theorem \ref{th3.2}.
\end{proof}
\section{Moderate deviation principle}\label{sec3}
In what follows, we recall   some basic notions concerned with LDPs
(see, e.g., \cite[Chapter 1]{DAZ}).
 Let $\S$ be a Polish space (i.e., a complete
separable metrizable topological
 space)  and $\mathscr{B}(\S)$ the Borel $\sigma$-algebra generated by all open sets in
 $\S$.
\begin{defn}\label{de4.1}
  A function $I:\S\mapsto[0,\8]$ is called a rate
function, if for each $M<\8$, the level set $\{x\in\S:I(x)\le M\}$
is a
  compact subset of $\S$. 
 \end{defn}
 \begin{defn}\label{de4.2}
 A family $\{X^\ep\}$ of~$\S$-valued random variables defined on the
 probability space $(\Omega,\mathcal {F},\mathbb{P})$ is said to satisfy the large deviation
principle
  on $\S$ with the rate function $I$ and the speed function $\{\lam(\ep)\}_{\ep>0}$, if the following conditions
  hold:
\begin{enumerate}
\item[(i)] {\rm(Upper bound)} For each closed subset $F$ of $\S$,
$$\limsup_{\ep\rightarrow 0} \ff{1}{\lam(\ep)}\log\P(X^\ep\in F)\le -\inf_{x\in F}I(x);$$
\item[(ii)] {\rm(Lower bound)} For each open subset $G$ of $\S$,
$$\liminf_{\ep\rightarrow 0} \ff{1}{\lam(\ep)}\log\P(X^\ep\in G)\ge -\inf_{x\in G}I(x).$$
\end{enumerate}
\end{defn}

we need to introduce some notation.\\

Define the Cameron-Martin space~$\bar{\mathscr{A}}$~by
\begin{equation}\label{eq1.10}
\begin{split}
\bar{\mathscr{A}}=&\big\{h:[0,T]\mapsto\mathbb{R}^m|h~\mbox{is}~
\bar{\mathscr{P}}/\mathscr{B}(\mathbb{R}^m)-\mbox{measurable},\\
&h(t)=\int_0^t\dot{h}(s)\d s,t\in[0,T],~\mbox{and}~\int_0^T|\dot{h}(s)|^2\d s<\8,~~\bar{\mathbb{P}}-a.s.\big\},
\end{split}
\end{equation}
where the dot denotes the generalized derivative, and define
\begin{equation}\label{eq1.11}
L_T(h):=\frac{1}{2}\int_0^T|\dot{h}(s)|^2\d s.
\end{equation}
For each~$N>0$,~let
$$S_N=\{h:[0,T]\mapsto\mathbb{R}^m:L_T(h)\le N\}.$$
Let~$S=\cup_{N\ge1}S_N$~and~$\bar{\mathscr{A}}_N=\{h\in\bar{\mathscr{A}}:h(\omega)\in S_N,~\bar{\mathbb{P}}-a.s.\}$.~\\

Recall the SDDEs of neutral type
\begin{equation}\label{eq33.1}
\d\{X^\ep(t)-G(X^\ep(t-\tau))\}=b(X^\ep(t),X^\ep(t-\tau))\d
t+\ss\ep\sigma(X^\ep(t),X^\ep(t-\tau))\d W(t),~~~t>0,
\end{equation}
In this section, we consider the Moderate deviation principles for this kind of SDDEs, which allow the coefficients are nonlinear growth with all the variables, specifically, we assume the following assumptions on the coefficients of \eqref{eq33.1} hold.~For more details, please refer to \cite[Theorem1.1]{BBB} and
\cite[Corollary 3.5.]{MM}.\\
Recall the polynomial function~$V(x,y)\le K(1+|x|^{q_1}+|y|^{q_1})$.
\begin{enumerate}
\item[(\bf A1)] Assume there are constants~$q>p\ge2, a_1\ge0, a_2>a_3\ge 0$~
and~$a_4>a_5>0$~such that
\begin{equation}\label{eq1.5}
\begin{split}
&p|x-G(y)|^{p-2}\Big(\langle x-G(y),b(x,y)\rangle
+\frac{(p-1)}{2}\|\sigma(x,y)\|_{HS}^2\Big)\\
&\le a_1-a_2|x|^p+a_3|y|^p-a_4|x|^q+a_5|y|^q
\end{split}
\end{equation}
for all~$(x,y)\in\mathbb{R}^n\times\mathbb{R}^n$.~
\item[(\bf A2)]~$b$~and~$\sigma$~are continuous and bounded on bounded subsets
 of~$\mathbb{R}^n\times\mathbb{R}^n$,~
\begin{equation}\label{eqq2}
|G(x)-G(y)|\le V(x,y)|x-y|.
\end{equation}
We also assume the assumptions for the gradients of the coefficients.
\item[(\bf A3)] $b(\cdot,\cdot)$ is Fr\'{e}chet differentiable w.r.t. each component, and the gradient satisfy follows,
\begin{equation}\label{eq1.6}
\|\nabla^{(1)}b(x_1,\cdot)-\nabla^{(1)}b(x_2,\cdot)\| \le
V(x_1,x_2)|x_1-x_2|,~~x_1,x_2\in\mathbb{R}^n,
\end{equation}
and
\begin{equation}\label{eq1.7}
\|\nabla^{(2)}b(\cdot,y_1)-\nabla^{(2)}b(\cdot,y_2)\| \le
V(y_1,y_2)|y_1-y_2|,~~y_1,y_2\in\mathbb{R}^n.
\end{equation}
and
\begin{equation}\label{eq11.7}
\|\nabla G(x)-\nabla G(y)\| \le
V(x,y)|x-y|,~~x,y\in\mathbb{R}^n.
\end{equation}
\end{enumerate}
\begin{rem}
 Reference \cite{MMM} investigated the moderate deviation principle for this kind of neutral stochastic
  differential delay equations with jumps. Inspirited by the assumptions therein, we give more weakly assumptions {\rm\textbf{(A1)-(A2)}}, specifically,
the drift and diffusion coefficients in this paper
 are allowed to be highly nonlinear
 growth with respect to the variables. We remark also that
 under {\rm\textbf{(A3)}}, the gradients are also polynomial growth with respect to variables.
 The assumptions in \cite{MMM} are the special case of this work.
\end{rem}

The main result of this section is stated as follows.

\begin{thm}\label{th5.1} Under {\rm\textbf{(A1)-(A3)}}, $\{Z^\ep\}$, defined by \eqref{eq1.3}, satisfies an LDP in $C([0,T];\R^n)$  with
speed $\lam^2(\ep)$  such that $\lam(\ep)\rightarrow \8$ and $
\ss\ep\lam(\ep)\rightarrow 0$   as $\ep\rightarrow 0 $ and the rate
function given by
\begin{equation}\label{eq5.4}
I(f)=\inf_{h\in \mathcal{S}_f}L_T(h),
\end{equation}
Herein, $\mathcal{S}_f:=\Big\{h\in S:f=\mathscr{G}^0\big(\int_0^\cdot\dot{h}(s)\d s\big)\Big\},$ and $Z^h(t)$ solves the  deterministic  differential delay
 equation of neutral type
 \begin{equation}\label{eq5.2}
 \begin{split}
 \d\{Z^h(t)-\nabla_{Z^h(t-\tau)}G(X^0(t-\tau))\}
 &=\{\sigma(X^0(t),X^0(t-\tau))\dot{h}(t)+\nabla_{Z^h(t)}^{(1)}b(X^0(t),X^0(t-\tau))\\
 &~~~+\nabla_{Z^h(t-\tau)}^{(2)}b(X^0(t),X^0(t-\tau))\}\d t
 \end{split}
 \end{equation}
with the initial value $Z^h(\theta)\equiv {\bf0_n}$, for any
$\theta\in[-\tau,0]$.
\end{thm}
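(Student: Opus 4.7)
My plan is to apply the Budhiraja--Dupuis weak convergence criterion. Writing $X^\ep=\mathscr{G}^\ep(W)$ for the solution map of \eqref{eq33.1}, one has $Z^\ep=\lambda(\ep)^{-1}\ep^{-1/2}(\mathscr{G}^\ep(W)-X^0)$, and by the Boué--Dupuis variational representation the Laplace functional $\ep\mapsto -\lambda(\ep)^{-2}\log\E\exp(-\lambda^2(\ep)F(Z^\ep))$ admits a control formulation in which controls $h^\ep\in\bar{\mathscr{A}}_N$ are rescaled by $\lambda(\ep)^{-1}$. Hence, to establish the LDP with speed $\lambda^2(\ep)$ and the stated rate function, it suffices to verify the following two conditions: (i) \textbf{Compactness of level sets:} for every $N<\infty$, the set $\mathcal{K}_N:=\{Z^h:h\in S_N\}$ is compact in $C([0,T];\R^n)$; (ii) \textbf{Controlled convergence:} for any $N<\infty$ and any family $\{h^\ep\}\subset\bar{\mathscr{A}}_N$ converging in distribution (with the weak topology on $S_N$) to $h\in\bar{\mathscr{A}}_N$, the controlled process $\tilde Z^{\,\ep,h^\ep}:=\lambda(\ep)^{-1}\ep^{-1/2}(\tilde X^{\,\ep,h^\ep}-X^0)$ converges in distribution to $Z^h$ in $C([0,T];\R^n)$, where $\tilde X^{\,\ep,h^\ep}$ is obtained from \eqref{eq33.1} via Girsanov's transformation with drift $\ep^{1/2}\lambda(\ep)^{-1}\sigma(\tilde X^{\,\ep,h^\ep}(t),\tilde X^{\,\ep,h^\ep}(t-\tau))\dot h^\ep(t)$.

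For (i), I would first establish well-posedness of the skeleton equation \eqref{eq5.2}: under (\textbf{A2})--(\textbf{A3}) the gradients $\nabla^{(i)}b(X^0(\cdot),X^0(\cdot-\tau))$ and $\nabla G(X^0(\cdot-\tau))$ are bounded on $[0,T]$ along the deterministic trajectory $X^0$ (itself bounded by \eqref{love} with $\ep=0$), so \eqref{eq5.2} is a linear non-autonomous neutral delay equation with bounded coefficients and the forcing term $\sigma(X^0(t),X^0(t-\tau))\dot h(t)\in L^2$. A step-by-step argument on intervals of length $\tau$ gives a unique solution $Z^h\in C([0,T];\R^n)$ together with an energy bound $\sup_{t\in[0,T]}|Z^h(t)|\le C_N$ for $h\in S_N$. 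Compactness of $\mathcal{K}_N$ then follows from the continuity of $h\mapsto Z^h$ from $S_N$ (with the weak $L^2$-topology on $\dot h$) into $C([0,T];\R^n)$, combined with the compactness of $S_N$ in that weak topology and an Arzelà--Ascoli argument on $\{Z^h\}$.

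For (ii), the main work consists of three pieces: first, a \emph{uniform-in-$\ep$ and $h^\ep$ moment estimate} on $\tilde X^{\,\ep,h^\ep}$; second, a moment estimate on $\tilde Z^{\,\ep,h^\ep}$; and third, identification of the weak limit. The first piece is where condition (\textbf{A1}) is crucial: applying Itô's formula to $|\tilde X^{\,\ep,h^\ep}(t)-G(\tilde X^{\,\ep,h^\ep}(t-\tau))|^p$, the dissipative term $-a_4|\tilde X^{\,\ep,h^\ep}|^q$ in \eqref{eq1.5} absorbs both the polynomial growth of $b,\sigma$ and the extra control drift $\ep^{1/2}\lambda(\ep)^{-1}\sigma\dot h^\ep$, whose contribution is controlled by Cauchy--Schwarz using $\int_0^T|\dot h^\ep|^2\d s\le 2N$ together with the vanishing prefactor $\ep\lambda(\ep)^{-2}\to 0$. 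This yields
\[
\sup_{\ep\in(0,1)}\sup_{h^\ep\in\bar{\mathscr{A}}_N}\E\sup_{t\in[0,T]}|\tilde X^{\,\ep,h^\ep}(t)|^p<\infty,\qquad p\ge 2,
\]
and analogously $\E\sup_{t\in[0,T]}V^p(\tilde X^{\,\ep,h^\ep}(t),X^0(t))\le C_{p,N}$. Second, subtracting the equation for $X^0$, writing the difference via a first-order Taylor expansion in $b$ and $G$ around $X^0$ (and exploiting (\textbf{A3})) as in the proof of Theorem \ref{th3.2}, I obtain for $\tilde Z^{\,\ep}=\tilde Z^{\,\ep,h^\ep}$ a decomposition into a linear part in $\tilde Z^{\,\ep}$, a quadratic remainder of order $\ep^{1/2}\lambda(\ep)|\tilde Z^{\,\ep}|^2V(\cdot,\cdot)$, and a term $\int_0^t\sigma(X^0(s),X^0(s-\tau))\dot h^\ep(s)\d s+$ fluctuation $\lambda(\ep)^{-1}\int_0^t\sigma\d W(s)$ plus lower order corrections. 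Combining the moment bounds with Gronwall in the form used in Lemmas \ref{le3.2}--\ref{le3.3} (via the cascade $p_i$) gives $\E\sup_{t\in[0,T]}|\tilde Z^{\,\ep}(t)|^p\le C_{p,N}$.

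Third, having these a priori bounds, I would prove tightness of $\{\tilde Z^{\,\ep,h^\ep}\}$ in $C([0,T];\R^n)$ and identify any weak limit. Given a subsequence along which $(h^\ep,\tilde Z^{\,\ep,h^\ep})\Rightarrow(h,\tilde Z)$, I would pass to the limit in the SDE for $\tilde Z^{\,\ep,h^\ep}$ term by term: the stochastic integral $\lambda(\ep)^{-1}\int_0^\cdot(\sigma(\tilde X^{\,\ep},\tilde X^{\,\ep}(\cdot-\tau))-\sigma(X^0,X^0(\cdot-\tau)))\d W$ vanishes in probability by $L^2$-isometry and the $V$-moment bound; the Taylor remainders vanish because they carry a prefactor $\ep^{1/2}\lambda(\ep)$ and $\ep^{1/2}\lambda(\ep)\to 0$; the controlled drift $\int_0^\cdot\sigma(\tilde X^{\,\ep},\tilde X^{\,\ep}(\cdot-\tau))\dot h^\ep\d s$ converges to $\int_0^\cdot\sigma(X^0,X^0(\cdot-\tau))\dot h\d s$ by the continuity of $\sigma$ along $\tilde X^{\,\ep}\to X^0$ in probability together with weak convergence of $\dot h^\ep$. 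The limit therefore satisfies \eqref{eq5.2}, so by uniqueness $\tilde Z=Z^h$. Condition (ii) follows.

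The main obstacle I anticipate is the uniform-in-$(\ep,h^\ep)$ polynomial-moment estimate on $\tilde X^{\,\ep,h^\ep}$: the control drift has size $\sqrt{\ep}\lambda(\ep)^{-1}\|\sigma\|\cdot|\dot h^\ep|$, which is not pointwise bounded and only square-integrable with a bound depending on $N$, while $\sigma$ itself is polynomial in the state. The dissipative term $-a_4|x|^q$ with $q>p$ from (\textbf{A1}) must be used to swallow this term together with the intrinsic polynomial growth $|y|^{q+1}$ of $\sigma$; a Young-type inequality matching the rates $p<q$ is the delicate step, and handling the neutral part $G$ under (\textbf{A2}) requires a preliminary $\tau$-step induction identical in spirit to that in Lemma \ref{le3.4}.
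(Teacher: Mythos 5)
Your proposal follows essentially the same route as the paper: the weak convergence (Budhiraja--Dupuis) criterion, verified through (a) a uniform moment bound on the Girsanov-controlled process obtained by applying It\^o's formula to $|X^{\ep,v}(t)-G(X^{\ep,v}(t-\tau))|^p$ and absorbing the control drift and the polynomial growth of $\sigma$ into the dissipative term of (\textbf{A1}), followed by a $\tau$-step induction; (b) continuity/compactness of the skeleton map via Arzel\`a--Ascoli; and (c) tightness of $Z^{\ep,v_\ep}$ plus Skorokhod representation and identification of the limit with \eqref{eq5.2} by uniqueness. The only cosmetic difference is that the paper invokes the abstract criterion from \cite[Theorem 4.4]{WV} rather than stating the two sufficient conditions directly, so your plan is correct and matches the paper's proof.
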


For the solution $X^\ep(t)$ to \eqref{eq33.1}, by the Yamada-Watanabe theorem 
\cite{YW},
there exists a measurable map
 $\bar{\G}^\ep$ such that
 $$X^\ep(\cdot)=\bar{\G}^\ep(\ss\ep W).$$
 Then, by the Girsanov theorem,
 for any $v\in\bar{\mathscr{A}}$,
 $X^{\ep,v}:=\bar{\G}^\ep\Big(\ss\ep W+\ss\ep\lam(\ep)\int_0^\cdot \dot{v}(s)\d s\Big)$ solves
 \begin{equation}\label{eq5.1}
 \begin{split}
 \d (X^{\ep,v}(t)-G(X^{\ep,v}(t-\tau)))&=b(X^{\ep,v}(t),X^{\ep,v}(t-\tau))\d t\\
 &~~+\ss\ep\sigma(X^{\ep,v}(t),X^{\ep,v}(t-\tau))\d W(t)\\
 &~~+\ss\ep\lam(\ep)\sigma(X^{\ep,v}(t),X^{\ep,v}(t-\tau))\dot{v}(t)\d t,~~~~~ t>0.\\
 \end{split}
 \end{equation}
So there exists a measurable map $\G^\ep:\bar{\mathscr{A}}\mapsto\R^n$ such that
 \begin{equation}\label{eq45}
Z^{\ep, v}:=\G^\ep\Big(\ss\ep W+\ss\ep\lam(\ep)\int_0^\cdot \dot{v}(s)\d s\Big)=\ff{X^{\ep,v}-X^0}{\ss\ep\lam(\ep)}.
\end{equation}
As a result,
\begin{equation}\label{eq4.4}
\begin{split}
&\d \Big(Z^{\ep,v}(t)-\ff{G(X^{\ep,v}(t-\tau))-G(X^0(t-\tau))}{\ss\ep\lam(\ep)}\Big)\\
&=\ff{1}{\lam(\ep)}\sigma(X^{\ep,v}(t),
X^{\ep,v}(t-\tau))\d W(t)+\sigma(X^{\ep,v}(t),X^{\ep,v}(t-\tau))\dot{v}(t)\d t\\
&~~~+\ff{b(X^{\ep,v}(t),X^{\ep,v}(t-\tau))
-b(X^0(t),X^{\ep,v}(t-\tau))}{\ss\ep\lam(\ep)}\d t\\
&~~~+\ff{b(X^0(t),X^{\ep,v}(t-\tau))-b(X^0(t),
X^0(t-\tau))}{\ss\ep\lam(\ep)}\d t
\end{split}
\end{equation}
with the initial condition $Z^{\ep,v}(\theta)={\bf0_n}$, for any $\theta\in[-\tau,0].$\\
\begin{lem}\label{lem1.1}
Assume ({\bf A2}) holds, there is a constant~$C>0$~and~$\ep_0\in(0,1)$~such that
\begin{equation}\label{eq11.1}
\begin{split}
\mathbb{E}\big(\sup_{0\le t\le T}|X^{\ep,v}(t)|^p\big)\vee
\big(\sup_{0\le t\le T}|X^{0}(t)|^p\big)\le C,~~p\ge 2,\ep\in(0,\ep_0),
\end{split}
\end{equation}
where~$C$~depends on~$\|\xi\|_\8:=\sup_{-\tau\le\theta\le0}|\xi(\theta)|$.~
\end{lem}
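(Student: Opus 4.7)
The plan is to apply It\^o's formula to $|X^{\ep,v}(t)-G(X^{\ep,v}(t-\tau))|^p$, which removes the neutral structure from the stochastic integral, then use hypothesis \textbf{(A1)} to absorb the drift and diffusion contributions, and control the extra Girsanov drift $\ss\ep\lam(\ep)\sigma\dot v$ by Young's inequality (exploiting $\ss\ep\lam(\ep)\to 0$). A stochastic Gronwall argument with the random weight $|\dot v|^2\in L^1([0,T])$ (bounded by $2N$ a.s.\ when $v\in\bar{\mathscr A}_N$) closes the estimate on the auxiliary process, and the desired bound on $X^{\ep,v}$ follows after an induction over $\tau$-intervals together with the polynomial growth of $G$ granted by \textbf{(A2)}.

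More concretely, set $Z^{\ep,v}(t):=X^{\ep,v}(t)-G(X^{\ep,v}(t-\tau))$. Using $|z^\top\sigma|^2\le|z|^2\|\sigma\|_{\rm HS}^2$, It\^o's formula for $f(z)=|z|^p$ produces the drift
\[
p|Z^{\ep,v}|^{p-2}\langle Z^{\ep,v},b\rangle+\ff{\ep p(p-1)}{2}|Z^{\ep,v}|^{p-2}\|\sigma\|_{\rm HS}^2+\ss\ep\lam(\ep)\,p|Z^{\ep,v}|^{p-2}\langle Z^{\ep,v},\sigma\dot v\rangle,
\]
where $b,\sigma$ are evaluated at $(X^{\ep,v},X^{\ep,v}(\cdot-\tau))$. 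For $\ep<1$ the first two terms are majorized by the right-hand side of \textbf{(A1)}; the control term I would split by Young's inequality $ab\le\frac c2 a^2+\frac1{2c}b^2$ with $c=(p-1)/2$, giving
\[
\ss\ep\lam(\ep)\,p|Z^{\ep,v}|^{p-1}\|\sigma\|_{\rm HS}|\dot v|\le\ff{p-1}{4}|Z^{\ep,v}|^{p-2}\|\sigma\|_{\rm HS}^2+\ff{\ep\lam^2(\ep)p^2}{p-1}|Z^{\ep,v}|^p|\dot v|^2.
\]
For $\ep_0$ so small that $\ep(p-1)+(p-1)/2\le p-1$ and $\ep\lam^2(\ep)\le 1$, the first piece is reabsorbed into the $(p-1)/2$ coefficient of $\|\sigma\|_{\rm HS}^2$ in \textbf{(A1)}, and the residual coefficient $\ep\lam^2(\ep)p^2/(p-1)$ stays uniformly bounded.

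Taking expectation after a standard localization and applying the Burkholder--Davis--Gundy inequality to the martingale part, and using $\int_0^t\E|X^{\ep,v}(s-\tau)|^\alpha\d s\le\tau\|\xi\|_\8^\alpha+\int_0^t\E|X^{\ep,v}(s)|^\alpha\d s$ for $\alpha\in\{p,q\}$, the assumptions $a_2>a_3$ and $a_4>a_5$ ensure that the positive delayed-power terms on the right are absorbed by the dissipative $-a_2|X|^p,-a_4|X|^q$ on the left. This reduces the estimate to
\[
\E\sup_{0\le s\le t}|Z^{\ep,v}(s)|^p\le C(\xi,T,p)+C\int_0^t\E\Big(\sup_{0\le r\le s}|Z^{\ep,v}(r)|^p\Big)|\dot v(s)|^2\d s,
\]
and since $\int_0^T|\dot v(s)|^2\d s\le 2N$ almost surely, a stochastic Gronwall inequality yields $\E\sup_{0\le t\le T}|Z^{\ep,v}(t)|^p\le C(N,\xi,T,p)$. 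The desired bound on $X^{\ep,v}$ then follows from $|x|^p\le 2^{p-1}(|x-G(y)|^p+|G(y)|^p)$ combined with $|G(y)|\le|G(0)|+V(y,0)|y|\le C(1+|y|^{q_1+1})$ granted by \textbf{(A2)}, by iterating on the intervals $[k\tau,(k+1)\tau]$, $k=0,1,\dots,[T/\tau]$, so that at each step the delay variable is already controlled in every $L^p$. The bound on $X^0$ is recovered by letting $\ep\downarrow 0$ with $v\equiv 0$.

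The main obstacle will be reconciling three features simultaneously: the neutral feedback, which forces us to work with the auxiliary process $Z^{\ep,v}$ and then pay a polynomial penalty to return to $X^{\ep,v}$; the Girsanov shift, whose drift is only $L^2$-integrable in time, so Gronwall has to be run with a random weight rather than a deterministic one; and the super-linear dissipation $-a_4|X|^q$ with $q>p$ from \textbf{(A1)}, which must be carried through the $\tau$-induction to guarantee that the delayed super-linear term $a_5|X(\cdot-\tau)|^q$ stays absorbed across successive intervals.
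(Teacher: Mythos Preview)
Your plan matches the paper's argument almost step for step: introduce the shifted process $M_\ep=X^{\ep,v}-G(X^{\ep,v}(\cdot-\tau))$, apply It\^o to $|M_\ep|^p$, feed the drift/diffusion combination into \textbf{(A1)}, split the Girsanov term by a Young-type inequality, handle the martingale by BDG, choose $\ep_0$ small so that the extra coefficients on $|M_\ep|^{p-2}\|\sigma\|_{\rm HS}^2$ fit under the $(p-1)/2$ budget of \textbf{(A1)}, and finish by the $\tau$-induction using $|G(y)|\le C(1+|y|^{q_1+1})$ from \textbf{(A2)}. Your explicit use of the delay shift $\int_0^t|X(s-\tau)|^\alpha\,\d s\le\tau\|\xi\|_\infty^\alpha+\int_0^t|X(s)|^\alpha\,\d s$ together with $a_2>a_3$, $a_4>a_5$ to kill the dissipative integral is exactly what the paper uses implicitly before writing its final recursion $\E\sup_{[0,t]}|X^{\ep,v}|^p\lesssim 1+\E\sup_{[-\tau,t-\tau]}|X^{\ep,v}|^{p(q_1+1)}$.

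Two small points. First, the displayed inequality
\[
\E\sup_{0\le s\le t}|Z^{\ep,v}(s)|^p\le C+C\int_0^t\E\Big(\sup_{0\le r\le s}|Z^{\ep,v}(r)|^p\Big)|\dot v(s)|^2\,\d s
\]
is not well-formed as written, since $|\dot v(s)|^2$ is random while the expectation has already been taken on the sup. The fix is the one the paper actually uses: keep everything inside one expectation, bound $\int_0^t\sup_{r\le s}|Z^{\ep,v}(r)|^p\,|\dot v(s)|^2\,\d s\le 2N\sup_{s\le t}|Z^{\ep,v}(s)|^p$ a.s.\ (using $v\in\bar{\mathscr A}_N$), and absorb this to the left once $\ep\lam^2(\ep)$ is small; no separate ``stochastic Gronwall'' is needed. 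Second, your absorption condition ``$\ep(p-1)+(p-1)/2\le p-1$'' is slightly off in the bookkeeping of the factor $p$ and omits the BDG remainder; the correct requirement is $\tfrac{\ep p(p-1)}{2}+\tfrac{p-1}{4}+C_{\rm BDG}\ep\le \tfrac{p(p-1)}{2}$, which of course still holds for $\ep$ small. These are cosmetic; the proof goes through as you outlined.
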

\begin{proof}
Let~$q'=q_1+1$~and Set~$M_\ep(t):=X^{\ep,v}(t)-G(X^{\ep,v}(t-\tau))$.~For any~$p\ge2$,~
\begin{equation}\label{eq1.2}
|X^{\ep,v}(t)|^p\le C(1+|M_\ep(t)|^p+|X^{\ep,v}(t-\tau)|^{pq'})
\end{equation}
\begin{equation}\label{eq11.3}
\begin{split}
\sup_{0\le s\le t}|X^{\ep,v}(s)|^p&
\1 1+\sup_{-\tau\le s\le t-\tau}|X^{\ep,v}|^{pq'}+M_{1,\ep}(t)\\
&~~~~+p\int_0^t|M_\ep(s)|^{p-2}\langle M_\ep(s),b(X^{\ep,v}(s),X^{\ep,v}(s-\tau))\rangle\d s\\
&~~~~+\frac{\ep p(p-1)}{2}\int_0^t|M_\ep(s)|^{p-2}
\|\sigma(X^{\ep,v}(s),X^{\ep,v}(s-\tau))\|_{HS}^2\d s\\
&~~~~+p\ss\ep\lambda(\ep)\int_0^t|M_\ep(s)|^{p-2}\langle M_\ep(s),\sigma(X^{\ep,v}
(s),X^{\ep,v}(s-\tau))\cdot \dot{v}(s)\rangle\d s,
\end{split}
\end{equation}
where
\begin{equation*}
M_{1,\ep}(t):=p\ss\ep\sup_{0\le s\le t}\Big|\int_0^s|M_\ep(z)|^{p-2}\langle M_\ep(z),\sigma(X^{\ep,v}(z),X^{\ep,v}(z-\tau))\d W(z)\rangle\Big|.
\end{equation*}
Firstly, we consider the~$M_{1,\ep}(t)$,~utilizing the B-D-G's inequality, it follows that
\begin{equation*}
\begin{split}
\mathbb{E}M_{1,\ep}(t)&\le4p\ss{2\ep}\mathbb{E}\big(\int_0^t|M_\ep(s)|^{2p-2}
\|\sigma(X^{\ep,v}(s),X^{\ep,v}(s-\tau))\|_{HS}^2\d s\big)^\frac{1}{2}\\
&\le\frac{1}{2}\mathbb{E}\sup_{0\le s\le t}|M_\ep(s)|^p+
16p^2\ep\mathbb{E}\int_0^tM_\ep(s)|^{p-2}
\|\sigma(X^{\ep,v}(s),X^{\ep,v}(s-\tau))\|_{HS}^2\d s.
\end{split}
\end{equation*}
\begin{equation*}
\begin{split}
&p\ss\ep\lambda(\ep)\int_0^t|M_\ep(s)|^{p-2}\langle M_\ep(s),\sigma(X^{\ep,v}
(s),X^{\ep,v}(s-\tau))\cdot \dot{v}(s)\rangle\d s\\
&\le p\ss\ep\lambda(\ep)\Big\{\int_0^t|M_\ep(s)|^{p-2}
\|\sigma(X^{\ep,v}(s),X^{\ep,v}(s-\tau))\|_{HS}^2\d s
+\int_0^t\sup_{0\le r\le s}|M_\ep(r)|^{p}|\dot{v}(s)|^2\d s\Big\}.
\end{split}
\end{equation*}
Combining the above estimates and \eqref{eq1.5},
$16p^2\ep_0\vee p\ep_0\lambda(\ep)\le\frac{p(p-1)}{2},~\ep\in(0,\ep_0)$,~one gets that
\begin{equation*}
\begin{split}
\mathbb{E}\Big(\sup_{0\le s\le t}|X^{\ep,v}(s)|^p\Big)
&\11+\mathbb{E}\Big(\sup_{-\tau\le s\le t-\tau}|X^{\ep,v}(s)|^{pq'}\Big)\\
&~~+\mathbb{E}\int_0^t\Big(-a_2|X^{\ep,v}(s)|^p+a_3|X^{\ep,v}(s-\tau)|^p
-a_4|X^{\ep,v}(s)|^q+a_5|X^{\ep,v}(s-\tau)|^q\Big)\d s
\end{split}
\end{equation*}

Then, for~$\ep\in(0,\ep_0)$, we drive  that
\begin{equation*}
\begin{split}
\mathbb{E}\Big(\sup_{0\le s\le t}|X^{\ep,v}(s)|^p\Big)&\11
+\mathbb{E}
\Big(\sup_{-\tau\le s\le t-\tau}|X^{\ep,v}(s)|^{pq'}\Big).
\end{split}
\end{equation*}
Thus, the assertion \eqref{eq11.1} is established by induction argument.
\end{proof}
For any $h\in S$,  define $\G^0(h)=Z^h,$ where $Z^h$ solves
\eqref{eq5.2}.
\begin{lem}\label{le4.1}
Assume {\rm\textbf{(A1)-(A2)}} hold, and suppose that
$h_n\rightarrow h $ as $n\rightarrow\8$ for any $h_n, h\in
S_N$. Then, as $n\rightarrow\8,$
$$\G^0(h_n)\rightarrow \G^0(h).$$

\end{lem}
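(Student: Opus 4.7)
The plan is to read \eqref{eq5.2} as a linear non-autonomous delay equation in the unknown $Z^h$, driven by the forcing $\sigma(X^0(\cdot),X^0(\cdot-\tau))\dot h$, and to analyse the difference $U_n:=Z^{h_n}-Z^h$ by Gronwall's inequality iterated over the delay intervals $[k\tau,(k+1)\tau]$. Since $\{X^0(t)\}_{t\in[-\tau,T]}$ is continuous and bounded, the operator-valued coefficients $\nabla^{(1)}b(X^0(s),X^0(s-\tau))$, $\nabla^{(2)}b(X^0(s),X^0(s-\tau))$ and $\nabla G(X^0(s-\tau))$ have operator norm uniformly bounded in $s\in[0,T]$ (using \textbf{(A3)} together with \eqref{suo}), and $s\mapsto \sigma(X^0(s),X^0(s-\tau))$ is continuous and bounded. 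A routine step-by-step argument on intervals of length $\tau$ then gives existence, uniqueness and a uniform-in-$h\in S_N$ bound $\sup_{h\in S_N}\sup_{0\le t\le T}|Z^h(t)|\le C_N$.

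Subtracting the equations for $h_n$ and $h$, linearity of the gradient operators in their direction-argument yields
\begin{equation*}
\begin{split}
U_n(t)&=\nabla_{U_n(t-\tau)}G(X^0(t-\tau))+\int_0^t \nabla^{(1)}_{U_n(s)}b(X^0(s),X^0(s-\tau))\,\d s\\
&\quad+\int_0^t \nabla^{(2)}_{U_n(s-\tau)}b(X^0(s),X^0(s-\tau))\,\d s+R_n(t),
\end{split}
\end{equation*}
where the sole place the data $h_n,h$ enter is the source term
\begin{equation*}
R_n(t):=\int_0^t \sigma(X^0(s),X^0(s-\tau))\bigl(\dot h_n(s)-\dot h(s)\bigr)\,\d s.
\end{equation*}

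The heart of the proof is to show $\sup_{0\le t\le T}|R_n(t)|\to 0$. In the Budhiraja--Dupuis setup, the topology on $S_N$ is the one inherited from the weak topology of $L^2([0,T];\R^m)$ on the derivatives, so $h_n\to h$ in $S_N$ means $\dot h_n\rightharpoonup \dot h$ weakly in $L^2$. Since $g(\cdot):=\sigma(X^0(\cdot),X^0(\cdot-\tau))$ is continuous and bounded, one gets pointwise $R_n(t)\to 0$ by testing against $\mathbf{1}_{[0,t]}g\in L^2$. Moreover, for $0\le s<t\le T$, Cauchy--Schwarz yields $|R_n(t)-R_n(s)|\le \|g\|_\infty\sqrt{|t-s|}\bigl(\sqrt{2N}+\|\dot h\|_{L^2}\bigr)$, giving equicontinuity of $\{R_n\}$ uniformly in $n$, so by an Arzel\`a--Ascoli-type argument the pointwise convergence is upgraded to uniform convergence in $t\in[0,T]$.

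With $R_n\to 0$ uniformly in hand, the convergence of $U_n$ follows by induction on delay intervals. On $[0,\tau]$ initial condition and all delay arguments vanish, leaving $U_n(t)=\int_0^t \nabla^{(1)}_{U_n(s)}b(X^0(s),X^0(s-\tau))\,\d s + R_n(t)$, and Gronwall gives $\sup_{0\le t\le\tau}|U_n(t)|\1 \sup_t|R_n(t)|\to 0$. Assuming inductively $\sup_{0\le t\le k\tau}|U_n(t)|\to 0$, on $[k\tau,(k+1)\tau]$ the delay contributions are controlled in sup-norm by this vanishing quantity while the non-delay part is again linear in $U_n$ with bounded coefficients, so a further Gronwall step extends the convergence to $[0,(k+1)\tau]$; after finitely many iterations one reaches $[0,T]$, proving $\G^0(h_n)\to \G^0(h)$ in $C([0,T];\R^n)$. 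The only genuinely non-routine step is the weak-to-uniform upgrade of $R_n$; everything else is linear bookkeeping plus repeated Gronwall estimates.
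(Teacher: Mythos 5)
Your proof is correct, but it takes a genuinely different route from the paper's. The paper argues by compactness: it establishes a uniform bound and equicontinuity for the family $\{Z^{h_n}\}$ itself (via Gronwall and induction over delay intervals), invokes Arzel\`a--Ascoli to get pre-compactness in $C([0,T];\R^n)$, and then identifies the limit of any convergent subsequence as $Z^h$ ``from the uniqueness.'' You instead exploit the linearity of \eqref{eq5.2} in $Z^h$ to write a closed linear delay equation for the difference $U_n=Z^{h_n}-Z^h$ whose only $n$-dependent input is the source term $R_n(t)=\int_0^t\sigma(X^0(s),X^0(s-\tau))(\dot h_n(s)-\dot h(s))\,\d s$, prove $R_n\to0$ uniformly by combining weak $L^2$-convergence of $\dot h_n$ (the correct topology on $S_N$ in this framework) with a $\sqrt{|t-s|}$ equicontinuity bound, and then close with Gronwall iterated over $[k\tau,(k+1)\tau]$. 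Your version is quantitative and, in one respect, more complete: the paper's final identification step is stated without detail, and making it rigorous requires passing to the limit in the $\sigma\dot h_n$ integral, which is exactly the weak-to-uniform convergence of $R_n$ that you carry out explicitly. What the paper's route buys in exchange is that its a priori bounds on $\{Z^{h_n}\}$ mirror the tightness estimates used for the stochastic controlled process in Lemma \ref{le4.3}, so the two proofs run in parallel; your route leans on linearity, which is special to this deterministic skeleton equation. Both arguments rely on the same underlying ingredients (boundedness of $X^0$, the bounds on $\nabla^{(i)}b$ and $\nabla G$ along $X^0$ from \textbf{(A2)}--\textbf{(A3)}, and induction over delay intervals).
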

\begin{proof}
From the notion of $\G^0$, it suffices to show that
\begin{equation}\label{suo0}
\lim_{\ep\rightarrow 0}\sup_{0\le t\le T} |Z^{h_n}(t)-Z^h(t)|=0
\end{equation}
for any $h_n, h\in S_N$. Whereas, to derive
\eqref{suo0},
according to the Arzel\`{a}-Ascoli theorem 
(see, e.g., \cite[Theorem 4.9]{KI}),
 ~we need only  show that
\begin{enumerate}
\item[(i)] $\{Z^{h_n}(\cdot)\}_{\ep\in(0, 1)}$ is uniformly bounded,
i.e., $\sup_{\ep\in(0,1)}\sup_{t\in[0,T]}|Z^{h_n}(t)|<\8;$
\item[(ii)] $\{Z^{h_n}(\cdot)\}_{\ep\in(0, 1)}$ is equicontinuous,
 i.e., $\lim_{\delta\rightarrow 0}\sup_{\ep\in(0,1)}|Z^{h_n}(t+\delta)-Z^{h_n}(t)|=0.$
\end{enumerate}
In what follows, we verify that (i) and (ii) hold one-by-one. With
the aid of  \eqref{eq1.5}, \eqref{eq11.1}, and
$X^\ep(\theta)=X^0(\theta)$ for any $\theta\in[-\tau,0]$,  we derive
from H\"older's inequality that for any   $h_n \in S_N$,
 \begin{equation*}
 \begin{split}
 |Z^{h_n}(t)|&\1V(X^0(t-\tau),X^0(t-\tau))|Z^{h_n}(t-\tau)|\\
 &~~+\Big(\int_0^t\|\sigma(X^0(s),X^0(s-\tau))\|_{HS}^2\d s\Big)^{\ff{1}{2}}\Big(\int_0^t|\dot{h}_n(s)|^2\d s\Big)^{\ff{1}{2}}\\
 &~~+\int_0^tV(X^0(s),X^0(s))|Z^{h_n}(s)|\d s+\int_0^{(t-\tau)\vee0}V(X^0(s),X^0(s))|Z^{h_n}(s)|\d s\\
 &\1 1+\sup_{0\le s\le( t-\tau)\vee0}|Z^{h_n}(s)|+\int_0^t|Z^{h_n}(s)|\d s.
 \end{split}
 \end{equation*}
By the Gronwall inequality, one has
 \begin{equation*}
 |Z^{h_n}(t)|\11+\sup_{0\le s\le( t-\tau)\vee0}|Z^{h_n}(s)|.
 \end{equation*}
Then (i) follows from an induction argument.\\
In the sequel, without loss of generality, we assume $\dd\in(0,1).$ From ({\bf A1}), \eqref{eq1.5} and \eqref{eq11.1}, one has  that
\begin{equation*}
\begin{split}
|X^0(t+\delta)-X^0(t)|&\le|G(X^0(t+\delta-\tau))-G(X^0(t-\tau))|+\int_t^{t+\delta}|b(X^0(s),X^0(s-\tau))|\d s\\
&\le V(X^0(t+\delta-\tau),X^0(t-\tau))|X^0(t+\delta-\tau))-X^0(t-\tau))|\\
&~~+\int_t^{t+\delta}\{1+|X^0(s-\tau)|^2+|X^0(s-\tau)|^q\}^{\frac{1}{2}}\d s\\
&\1|X^0(t+\delta-\tau))-X^0(t-\tau))|+\delta.\\
\end{split}
\end{equation*}
Hence, we further get that
\begin{equation}\label{eq77}
 |X^0(t+\delta)-X^0(t)|\1\delta, ~~~t\in[0,T]
 \end{equation}
  by an induction argument and the continuity of the initial value.

With the result of (i), and taking \eqref{eq1.5}, \eqref{eq11.1} into consideration,
for~$p=2$,~we have
\begin{equation*}
\begin{split}
|Z^{h_n}(t+\delta)-Z^{h_n}(t)|
&\1|\nabla_{Z^{h_n}(t+\delta-\tau)}(G(X^0(t+\delta-\tau))-G(X^0(t-\tau)))|\\
&~~+|\nabla_{Z^{h_n}(t+\delta-\tau)-Z^{h_n}(t-\tau)}G(X^0(t-\tau))|\\
&~~+\int_t^{t+\delta}\|\sigma(X^0(s),X^0(s-\tau))\|_{HS}^2\d s
+\int_t^{t+\delta}|\dot{h}_n(s)|^2\d s\\
&~~+\int_t^{t+\delta}V(X^0(s),X^0(s))|Z^{h_n}(s)|\d s+\int_{t-\tau}^{t-\tau+\delta}V(X^0(s),X^0(s))|Z^{h_n}(s)|\d s\\
&\1V(X^0(t+\delta-\tau),X^0(t-\tau))|X^0(t+\delta-\tau)-X^0(t-\tau)|\cdot|Z^{h_n}(t+\delta-\tau)|\\
&~~+V(X^0(t-\tau),X^0(t-\tau))|Z^{h_n}(t+\delta-\tau)-Z^{h_n}(t-\tau)|\\
&~~+\delta+\Big(\int_{t-\tau}^{t-\tau+\delta}|Z^{h_n}(s)|^2\d s\Big)^{\ff{1}{2}}\\
&\1\delta+|Z^{h_n}(t+\delta-\tau)-Z^{h_n}(t-\tau)|,
\end{split}
\end{equation*}
where in the first inequality, we have used ({\bf A3}), the second inequality, we have used \eqref{eq11.7}, \eqref{eq77} and $h_n,h\in S_N$.
As a result, (ii) is established by an induction argument and the continuity of the initial data.\\
Since~$(Z^{h_n}(\cdot))_{n\in\mathbb{N}}$~is pre-compact in~$C([0,T];\mathbb{R}^n)$,~every sequence, which is still denoted by~$(Z^{h_n}(\cdot))_{n\in\mathbb{N}}$,
~has a convergent subsequence. So we conclude that~$Z^h(\cdot)$~be the limit point of~$(Z^{h_n}(\cdot))_{n\in\mathbb{N}}$~from the uniqueness.

\end{proof}
\begin{lem}\label{le4.3}
 Let $v,v_\ep\in\bar{\mathscr{A}}_N$ such that $v_\ep$ converges weakly to $v$,
 as $\ep\rightarrow 0$.  Then
 $$\G^\ep\Big(\ss\ep W+\ss\ep\lam(\ep)\int_0^\cdot
 \dot{v}_\ep(s)\d s\Big)\Rightarrow \G^0\Big(\int_0^\cdot\dot{v}(s)\d s\Big),$$
 where ``$\Rightarrow$" stands for  convergence in distribution of random variables.
\end{lem}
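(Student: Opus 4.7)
The plan is to pass to the limit in \eqref{eq4.4}, satisfied by $Z^{\ep,v_\ep}:=\G^\ep(\ss\ep W+\ss\ep\lam(\ep)\int_0^\cdot \dot v_\ep\,\d s)$, and to identify any subsequential limit as the unique solution $Z^v$ of the skeleton \eqref{eq5.2} driven by $v$, whence the weak convergence claim follows.

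First I would upgrade Lemma \ref{lem1.1} to uniform-in-$\ep$ controlled-system estimates: for every $p\ge 2$ there exist $\ep_0\in(0,1)$ and $C_{p,N}>0$ with
\[
\E\sup_{0\le t\le T}|X^{\ep,v_\ep}(t)|^p+\E\sup_{0\le t\le T}|Z^{\ep,v_\ep}(t)|^p\le C_{p,N},\qquad \E\sup_{0\le t\le T}|X^{\ep,v_\ep}(t)-X^0(t)|^p\1(\ss\ep\lam(\ep))^p.
\]
These follow by applying It\^o to $|M_\ep|^p$ and $|M_\ep-M_0|^p$ with $M_\ep:=X^{\ep,v_\ep}-G(X^{\ep,v_\ep}(\cdot-\tau))$, using the coercivity \eqref{eq1.5}, B--D--G, Young's inequality to absorb the control drift $\ss\ep\lam(\ep)\sigma\dot v_\ep$ through $\int_0^T|\dot v_\ep|^2\le N$ (recalling $\ss\ep\lam(\ep)\to 0$), followed by Gronwall and induction on intervals of length $\tau$, in the spirit of Lemmas \ref{le3.4}--\ref{le3.2} and \ref{lem1.1}. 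Coupling these to an increment estimate $\E|Z^{\ep,v_\ep}(t+\dd)-Z^{\ep,v_\ep}(t)|^p\1\dd^{p/2}$ obtained from \eqref{eq4.4} (B--D--G with prefactor $1/\lam(\ep)$ on the noise, Cauchy--Schwarz with $\int_0^T|\dot v_\ep|^2\le N$ on the control term, and Taylor with \eqref{eq1.6}--\eqref{eq1.7} on the two difference quotients), a Kolmogorov-type criterion yields tightness of $\{Z^{\ep,v_\ep}\}$ in $C([0,T];\R^n)$.

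To identify the limit I would invoke Skorokhod representation on the product space $C([0,T];\R^n)\times C([0,T];\R^m)\times S_N$ (with $S_N$ endowed with its weak $L^2$ topology, under which it is compact metrizable), extracting along a subsequence almost-sure convergences $Z^{\ep,v_\ep}\to \tt Z$ uniformly, $\ss\ep W\to 0$ uniformly, and $v_\ep\to v$ weakly in $L^2$; the moment bounds then force $X^{\ep,v_\ep}\to X^0$ uniformly. Passing to the limit in \eqref{eq4.4}, the stochastic integral vanishes in $L^2$ via the factor $1/\lam(\ep)$; the term $\int_0^\cdot \sigma(X^{\ep,v_\ep},X^{\ep,v_\ep}(\cdot-\tau))\dot v_\ep\,\d s$ converges to $\int_0^\cdot \sigma(X^0,X^0(\cdot-\tau))\dot v\,\d s$ through strong $L^2$-convergence of the $\sigma$-factor paired with weak $L^2$-convergence of $\dot v_\ep$; and each difference quotient, rewritten by the fundamental theorem of calculus as
\[
\ff{b(X^{\ep,v_\ep}(s),X^{\ep,v_\ep}(s-\tau))-b(X^0(s),X^{\ep,v_\ep}(s-\tau))}{\ss\ep\lam(\ep)}=\int_0^1\nabla^{(1)}_{Z^{\ep,v_\ep}(s)}b\bigl(X^0(s)+\theta(X^{\ep,v_\ep}(s)-X^0(s)),X^{\ep,v_\ep}(s-\tau)\bigr)\,\d\theta,
\]
converges to $\nabla^{(1)}_{\tt Z(s)}b(X^0(s),X^0(s-\tau))$ by continuity of $\nabla b$ (from \eqref{eq1.6}) and the uniform convergences above. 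The $G$-term is handled identically via \eqref{eq11.7}, so $\tt Z$ satisfies \eqref{eq5.2} driven by $v$.

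Uniqueness of the skeleton \eqref{eq5.2} (the Gronwall argument underlying Lemma \ref{le4.1}) then gives $\tt Z=Z^v$; since the limit is independent of the subsequence, the full family converges in distribution. The hardest point lies in the Taylor expansions under only polynomial growth of $V$: the remainder terms behave schematically like $V(X^0,X^{\ep,v_\ep})|X^{\ep,v_\ep}-X^0|\cdot|Z^{\ep,v_\ep}|$, so the smallness $|X^{\ep,v_\ep}-X^0|\approx\ss\ep\lam(\ep)$ in high $L^p$-norm must absorb all polynomial blow-up from $V$ and $Z^{\ep,v_\ep}$, which is precisely why the first step must provide arbitrarily high moments and proceed by induction on successive intervals of length $\tau$.
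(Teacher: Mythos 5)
Your proposal is correct and follows essentially the same route as the paper: uniform moment and increment estimates for $Z^{\ep,v_\ep}$ (built on the controlled-state bounds of Lemma \ref{lem1.1} and Taylor expansions of $b$ and $G$) give tightness in $C([0,T];\R^n)$, after which the Skorokhod representation theorem, passage to the limit in \eqref{eq4.4} (with the stochastic integral killed by the factor $1/\lam(\ep)$ and the control term handled by the strong--weak $L^2$ pairing), and uniqueness of the skeleton equation \eqref{eq5.2} identify the limit. Your write-up is, if anything, more explicit than the paper's on the weak-$L^2$ convergence of $\dot v_\ep$ and on why high moments plus induction over intervals of length $\tau$ are needed to absorb the polynomial weight $V$.
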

\begin{proof}
To begin, we  show that $\{Z^{\ep,v_\ep}\}_{\ep\in(0,1)}$ is tight in $C([0,T];\R^n)$.
By virtue of the Arzel\`{a}-Ascoli theorem (see, e.g., \cite[Theorem 4.11]{KI}), it is sufficient to verify that
\begin{enumerate}
\item[(i)] $\sup_{\ep\in(0,1)}\E|Z^{\ep,v_\ep}(t)|^\gamma<\8;$
\item[(ii)] $\sup_{\ep\in(0,1)}\E|Z^{\ep,v_\ep}(t)-Z^{\ep,v_\ep}(s)|^\alpha\le C_T|t-s|^{1+\beta};~~~~~0\le s,t\le T,~|t-s|<1~$
\end{enumerate}
for some positive constants $\alpha, \beta,\gamma$ and $C_T.$\\
By the chain rule and the Taylor expansion, in addition to \eqref{eq1.5},
 \eqref{eq1.6}, \eqref{eq1.7}, and \eqref{eq11.7} for $v,v_\ep\in\bar{\mathscr{A}}_N$,
 we derive  that
\begin{equation*}
\begin{split}
\E|Z^{\ep,v_\ep}(t)|^p&\1\E|\nabla_{Z^{\ep,v_\ep}(t-\tau)}G(X^0(t-\tau)+u^\ep(t-\tau))|^p\\
&~~+\lam^{-p}(\ep)\mathbb{E}\Big(\int_0^t
\|\sigma(X^{\ep,v_\ep}(s),X^{\ep,v_\ep}(s-\tau))\|_{HS}^p\d s\\
&~~+
\E\Big(\int_0^t|\dot{v}_\ep(s)|^2\d s\Big)^{\ff{p}{2}}\Big(\int_0^t\|\sigma(X^{\ep,v_\ep}(s),X^{\ep,v_\ep}(s-\tau))\|_{HS}^2\d s\Big)^{\ff{p}{2}}\\
&~~+\int_0^t\E|\nabla_{Z^{\ep,v_\ep}(s)}^{(1)}b(X^0(s)+u^\ep(s),X^{\ep,v_\ep}(s-\tau))|^p\d s\\
&~~+\int_0^t\E|\nabla_{Z^{\ep,v_\ep}(s-\tau)}^{(2)}b(X^0(s),X^0(s-\tau)+u^\ep(s-\tau))|^p\d s\\
&\11+\lam^{-p}(\ep)+\sup_{0\le s\le( t-\tau)\vee0}\Big(\E|Z^{\ep,v_\ep}(s)|^{2p}\Big)^{\ff{1}{2}}+\int_0^t\E|Z^{\ep,v_\ep}(s)|^p\d s\\
&~~+\int_0^t(\E|Z^{\ep,v_\ep}(s-\tau)|^{2p})^{\ff{1}{2}}\d s, ~~~~t\in[0,T], ~~~p\ge 2.
\end{split}
\end{equation*}
In the last step, we utilize the \eqref{eq11.1} and the H\"older inequality, Then taking  Gronwall's inequality into consideration, one has that
\begin{equation}\label{eq11}
\begin{split}
\E|Z^{\ep,v_\ep}(t)|^p\11+\lam^{-p}(\ep)+\sup_{0\le s\le( t-\tau)\vee0}\Big(\E|Z^{\ep,v_\ep}(s)|^{2p}\Big)^{\ff{1}{2}}+\int_0^t(\E|Z^{\ep,v_\ep}(s-\tau)|^{2p})^{\ff{1}{2}}\d s.
\end{split}
\end{equation}
Hereinafter, by mimicking the argument of Theorem \ref{th3.2}, we get that

\begin{equation*}
\E|Z^{\ep,v_\ep}(t)|^p\11+\lam^{-p}(\ep),~~~~t\in[0,T].
\end{equation*}
 Hence (i) holds with $\gamma=p$. \\
In the sequel, note that  \eqref{eq1.6}, \eqref{eq1.7}, \eqref{eq11.7} and \eqref{eq11.1}. Then utilizing the Taylor expansion fields that
\begin{equation*}
\begin{split}
&\E|Z^{\ep,v_\ep}(t)-Z^{\ep,v_\ep}(s)|^\alpha\\
&\1\E|\nabla_{Z^{\ep,v_\ep}(t-\tau)}G(X^0(t-\tau)+u^\ep(t-\tau))
-\nabla_{Z^{\ep,v_\ep}(t-\tau)}G(X^0(s-\tau)+u^\ep(s-\tau))|^\alpha\\
&~~+\E|\nabla_{Z^{\ep,v_\ep}(t-\tau)-Z^{\ep,v_\ep}(s-\tau)}G(X^0(s-\tau)
+u^\ep(s-\tau))|^\alpha\\
&~~+\ff{|t-s|^{\ff{\alpha-2}{2}}}{\lam^\alpha(\ep)}
\int_s^t\E\|\sigma(X^{\ep,v_\ep}(r),X^{\ep,v_\ep}(r-\tau))\|_{HS}^\alpha\d r\\
&~~+|t-s|^{\frac{\alpha}{\alpha-1}}
\int_s^t\E|\nabla_{Z^{\ep,v_\ep}(r)}^{(1)}b(X^0(r)+u^\ep(r),X^{\ep,v_\ep}(r-\tau))|^\alpha\d r\\
&~~+|t-s|^{\frac{\alpha}{\alpha-1}}
\int_s^t\E|\nabla_{Z^{\ep,v_\ep}(r-\tau)}^{(2)}b(X^0(r),X^0(r-\tau)+u^\ep(r-\tau))|^\alpha\d r\\
&\1|t-s|^{\frac{2\alpha-1}{\alpha-1}}+\ff{|t-s|^{\ff{\alpha}{2}}}{\lam^\alpha(\ep)}
+(\E|Z^{\ep,v_\ep}(t-\tau)-Z^{\ep,v_\ep}(s-\tau)|^{2\alpha})^\ff{1}{2},~~~0\le s,t\le T.
\end{split}
\end{equation*}
Next, taking the induction argument has that
$$\E|Z^{\ep,v_\ep}(t)-Z^{\ep,v_\ep}(s)|^\alpha\1|t-s|^{\frac{2\alpha-1}{\alpha-1}}
+\ff{|t-s|^{\ff{\alpha}{2}}}{\lam^\alpha(\ep)}.$$
Therefore, (ii) holds with $\alpha=2(1+\beta).$
 Thus $\{Z^{\ep,v_\ep}\}_{\ep\in(0,1)}$ is tight in $C([0,T];\R^n)$.\\
In the sequel, it suffices to show that $Z^v$ is the unique limit point of $\{Z^{\ep,v_\ep}\}_{\ep\in(0,1)}.$ Let
$$M^\ep(t)=\ff{1}{\lam(\ep)}\int_0^t\sigma(X^{\ep,v_\ep}(s),X^{\ep,v_\ep}(s-\tau))\d W(s).$$
Since $\{Z^{\ep,v_\ep}\}_{\ep\in(0,1)}$ is tight in $C([0,T];\R^n)$, we can choose a subsequence of $(Z^{\ep,v_\ep},v_\ep,M^\ep)$
convergent weakly to $(Y,v,0)$  as $\ep\rightarrow 0.$ Without loss of generality,
 by the Skorokhod representation theorem \cite{SK},
there exists a probability space $(\bar{\Omega},\bar{\mathcal{F}},\{\bar{\mathcal{F}_t}\}_{t\ge 0 },\bar{\P})$,
 on this basis, an  Brownian motion $\bar{W}$ and
  a family of $\bar{\mathcal{F}}$-predictable process $\{\bar{v}_\ep;\ep>0\},\bar{v}\in\bar{\mathscr{A}}$
  taking values on $\bar{\mathscr{A}}_N,\bar{\P}$-a.s., such that the joint
  law of $(v_\ep,v,W)$ under $\P$ coincides with that of $(\bar{v}_\ep,\bar{v},\bar{W})$ under $\bar{\P}$ and
  $$\lim_{\ep\rightarrow 0}\int_0^T<\bar{v}_\ep-\bar{v},g>\d s=0,~
  \forall g\in\bar{\mathscr{A}},\bar{\P}-a.s.$$
Without confusion, we drop the bars in the notation.
Thus, we may assume
$$(Z^{\ep,v_\ep},v_\ep,M^\ep)\rightarrow (Y,v,0),~~~\bar{\P}-a.s.$$
Taking $\ep\rightarrow 0$ on both sides of \eqref{eq4.4}, we infer that $Y$ also satisfies \eqref{eq5.2}. Thus the desired assertion follows from the uniqueness.
\end{proof}

\noindent\textbf{The proof of Theorem \ref{th5.1}}
\begin{proof}
With Lemmas \ref{le4.1} and \ref{le4.3} in hand and by taking
\cite[Theorem 4.4]{WV} into account,
 the proof of Theorem \ref{th5.1} can be completed.  \end{proof}
\section{Examples}\label{sec4}
Theorem \ref{th5.1} covers many highly nonlinear SDDEs. Let us discuss two examples at the end of this section.
\begin{exa}\label{ex1}
Consider a one-dimensional SDDE
\begin{equation}\label{eqq1}
\d x(t)=[x^2(t-\tau)-2x(t)-x^3(t)]\d t+\ss\ep x^2(t-\tau)\d B(t),
\end{equation}
where~$B(t)$~is a one dimensional Brownian motion.\\
For simplicity, let~$y=x(t-\tau),x=x(t)$,~
\begin{equation*}
\begin{split}
2x(-x^3-2x+y^2)+y^4\le -2x^4-3x^2+y^2+y^4.
\end{split}
\end{equation*}
where~$p=2,~q=4$~and~$a_1=0, a_2=2,a_3=1,a_4=3,a_5=1$,~
\begin{equation*}
\|\nabla^{(1)}b(x_1,\cdot)-\nabla^{(1)}b(x_2,\cdot)\|\le3(x_1+x_2)|x_1-x_2|,
\end{equation*}
\begin{equation*}
\|\nabla^{(2)}b(\cdot,y_1)-\nabla^{(2)}b(\cdot,y_2)\| \le2|y_1-y_2|.
\end{equation*}
Assumptions {\rm\textbf{(A1)-(A3)}} are therefore satisfied, then the conclusion of theorem \ref{th5.1} is established.
\end{exa}
\begin{exa}\label{ex2}
We consider another equation
\begin{equation*}
\d x(t)=-x^3(t)-2x(t)+x(t-\tau)\d t+\ss\ep x^2(t)\d B(t)
\end{equation*}
For simplicity, we set~$y=x(t-\tau),x=x(t)$,~
\begin{equation*}
2x(-x^3-2x+y)+x^4\le-2x^4-3x^2+y^2,
\end{equation*}
where~$p=2,~q=4$~and~$a_1=0, a_2=3,a_3=1,a_4=2,a_5=0$,~
\begin{equation*}
\|\nabla^{(1)}b(x_1,\cdot)-\nabla^{(1)}b(x_2,\cdot)\|\le3(x_1+x_2)|x_1-x_2|,
\end{equation*}
\begin{equation*}
\|\nabla^{(2)}b(\cdot,y_1)-\nabla^{(2)}b(\cdot,y_2)\|=0
\end{equation*}
Assumptions {\rm\textbf{(A1)-(A3)}} are therefore satisfied, then the conclusion of theorem \ref{th5.1} is established.
\end{exa}

\section*{Acknowledgements}
The authors are grateful to the anonymous referees for their valuable comments and corrections.


\end{document}